\documentclass[twocolumn]{autart}    % Enable this line and disable the 
\usepackage{graphicx}
\usepackage{amssymb,mathrsfs}
\usepackage{color}
\usepackage{xcolor}
\usepackage{graphicx}
\usepackage{comment}
\usepackage{float}
\usepackage{mathtools}
\usepackage{framed}

\mathtoolsset{showonlyrefs}
\usepackage{lipsum}
\usepackage{amsfonts}
\usepackage{graphicx}
\usepackage{epstopdf}
\usepackage{algorithmic}
\usepackage[normalem]{ulem}
\usepackage{enumitem}

\newcommand{\R}{\mathbb{R}}

\newcommand{\F}{\mathscr{F}}
\newcommand{\E}{\mathbb{E}}

\newcommand{\ud}{\,\mathrm{d}}
\newcommand{\QV}[2]{\left\langle #1, #2\right\rangle}

\newcommand{\BAR}[1]{\overline{#1}}

\newcommand{\transpose}{^{\operatorname{T}}}
\newcommand{\mc}[1]{\mathcal{#1}}

\newcommand{\PP}{\mathbb{P}}

\newcommand{\mb}[1]{\mathbf{#1}}

\newcommand{\Tr}{\mathrm{Tr}}

\begin{document}

\begin{frontmatter}

\title{Optimal Design of Stealthy Attacks in Partially Observed Linear Systems: A Likelihood-Based Approach\thanksref{footnoteinfo}}

\thanks[footnoteinfo]{The material in this paper was not presented at any conference.
R.H. was partially supported by the ONR grant under \#N00014-24-1-2432, the Simons Foundation (MP-TSM-00002783) and the NSF grant DMS-2420988.}

\author[1]{Haosheng Zhou}\ead{hzhou593@ucsb.edu}, \author[1,2]{Ruimeng Hu\corauthref{corr}}\ead{rhu@ucsb.edu}          
\corauth[corr]{Corresponding author.}

\address[1]{Department of Statistics and Applied Probability, University of California, Santa Barbara, CA 93106-3110, USA.} 
\address[2]{Department of Mathematics, University of California, Santa Barbara, CA 93106-3080, USA.}

\begin{keyword}                          
stealthy attack; attack detection; partially observable system; stochastic control; Kalman-Bucy filtering
\end{keyword}

\begin{abstract}      
We study the optimal design of stealthy attacks against partially observed linear control systems. We first propose a novel likelihood-based detection mechanism derived from the innovation process, based on which we quantify stealthiness and formulate an attack design problem that trades off performance degradation and detectability. We develop a tractable control-theoretic framework for optimal stealthy attacks under two information structures: deterministic attacks fixed prior to system evolution, and adaptive attacks constructed from available observations. In the adaptive setting, the attacker’s partial observation leads to a stochastic control problem with an endogenous information structure. We address this challenge through a hierarchical optimization framework combined with the separation principle, reducing the problem to a Markovian control formulation and yielding semi-explicit optimal attacks. We further establish well-posedness of the resulting systems and illustrate through numerical experiments how information constraints shape the trade-off between attack effectiveness and stealthiness.
\end{abstract}

\end{frontmatter}

\section{Introduction}
Since the early development of control systems, ensuring resilience against adversarial disturbances has been a fundamental concern. 
While sufficiently large disturbances can, in principle, degrade system performance arbitrarily, such attacks are typically unrealistic in practice, as large anomalies can be detected by monitoring mechanisms. Once detected, the system operator may respond by modifying the control policy or shutting down the system, thereby limiting the attacker’s impact. This observation highlights a key constraint in adversarial design: effective attacks must remain difficult to detect.

Motivated by this challenge, the notion of \emph{stealthy attacks} has emerged as a natural modeling framework, in which the attacker seeks to degrade system performance while remaining undetected by the monitoring mechanisms deployed by the agent. 
The optimal design of stealthy attacks has recently attracted increasing attention, particularly in the context of the security of cyber–physical systems \cite{kwon2013security,sui2020vulnerability}, critical infrastructures \cite{lehto2022cyber}, and aerial systems \cite{kwon2014analysis}.

The detection of malicious attacks in control and estimation systems has been extensively studied in the literature. A prominent class of model-based methods relies on the innovation process, such as the classical \(\chi^2\)-detector \cite{willsky2003generalized} which exploits its Gaussian structure. Other approaches detect inconsistencies between estimated and observed signals \cite{jiang2025attack,rawat2015detection} or employ likelihood-based criteria \cite{das2024almost}. In parallel, data-driven methods based on machine learning and statistical techniques have also been developed, including support vector machines and neural networks for attack detection \cite{chen2005application}, graph-based autoregressive models \cite{sharma2013fault}, and dynamic mode decomposition \cite{mobini2024online}.
We refer interested readers to \cite{tan2020brief,zhang2021survey} for  comprehensive overviews.

Complementary to attack detection, the design of stealthy attacks has been investigated under various system models and detection schemes. Existing works typically focus on discrete-time systems and enforce stealthiness through hard constraints on test statistics, such as $\chi^2$-statistics \cite{chen2017optimal,mo2010false}, hypothesis-testing-based criteria \cite{fang2019stealthy,ren2021kullback}, or innovation-based criteria in which stealthiness is measured by the Kullback-Leibler divergence  \cite{jin2022optimal}. The work \cite{sun2025secure} is one of the few studies of attacked dynamics in a continuous-time setting; however, it focuses on secure state estimation rather than attack design.
Interested readers are referred to the survey \cite{lian2024survey} for recent developments in stealthy attack design. 
While these approaches provide valuable formulations of the trade-off between performance degradation and detectability, they are often developed from a primarily numerical perspective and do not yield analytical or semi-explicit characterizations of optimal attacks. Moreover, continuous-time formulations and adaptive attack strategies under partial observation remain relatively underexplored.

In this paper, we develop a continuous-time likelihood-based stealthiness framework for partially observed linear control systems, which exploits the linear-quadratic (LQ) structure of the resulting attack-design problem and provides tractable characterizations of optimal stealthy attacks under both deterministic and adaptive information structures.
The system consists of an agent and an attacker, both with full knowledge of the system dynamics but operating under partial observation.
The agent aims to accomplish a primary task modeled as a stochastic control problem but remains unaware of the presence of attacks, whereas the attacker injects disturbances into the state-observation dynamics to degrade the agent's performance.
Anticipating that the agent may employ a likelihood-based detector, the attacker incorporates a notion of stealthiness as a precautionary consideration, balancing attack effectiveness against detectability under different information structures. The main contributions of this paper are as follows:
\begin{enumerate}
\item We propose a likelihood-based attack detector via a Girsanov-type argument, which provides  a tractable path-space criterion for stealthiness.
In contrast to classical innovation-based detectors such as the \(\chi^2\)-detector \cite{willsky1976survey}, the proposed approach exploits the full distributional structure of the innovation process, leading to a more informative measure of detectability.

\item We develop a tractable framework for stealthy attack design in continuous time. In the deterministic setting, the problem reduces to a LQ control problem that admits semi-explicit solutions characterized by coupled systems of Riccati equations, together with well-posedness guarantees. In the adaptive setting, the attacker operates under partial observation, which leads to a stochastic control problem with control-dependent innovations.
We address this challenge by combining a hierarchical optimization approach with the separation principle, yielding semi-explicit characterizations of optimal adaptive attacks. To the best of our knowledge, this is one of the first
continuous-time formulations that integrates likelihood-based stealthiness with adaptive attack design under
partial observation.

\item Numerical experiments demonstrate that the proposed designs achieve improved trade-offs between effectiveness and stealthiness compared to heuristic approaches, illustrating the impact of information constraints on optimal attack strategies.

\end{enumerate}

The rest of the paper is organized as follows: 
Section~\ref{sec:agent} formulates the agent's primary task as a partially observable control problem and quantifies the performance degradation under attacks.
Section~\ref{sec:detector} derives the likelihood-based attack detector and introduces the notion of stealthiness. 
Sections~\ref{sec:det} and~\ref{sec:adpt} study the design of deterministic and adaptive stealthy attacks under different information structures. 
Numerical results are presented in Section~\ref{sec:numerics}, and concluding remarks appear in Section~\ref{sec:conclusions}.
For presentational clarity, all proofs are deferred to the supplementary material.

\textbf{Notations.}
Fix a finite horizon \([0,T]\). 
For a stochastic process \(\{X_t\}\), let \(\{\F^X_t\}\) be its natural filtration, i.e., \(\F^X_t := \sigma(X_s,\ \forall s\in[0,t])\).
Denote by \(X_{[0,t]} := \{X_s\}_{s\in[0,t]}\) the path of \(X\) up to time \(t\), and by \(\mu_X\) its law on the path space.
The space of square-integrable \(\R^d\)-valued random variables is denoted by \(L^2(\R^d)\).
For subsets \(U,V\) of finite-dimensional Euclidean spaces, let \(C(U;V)\) be the space of continuous maps from \(U\) to \(V\).
We write \(\mathrm{Concat}(\cdot)\) for vertical concatenation of vectors or matrices.
Let \(\mathbb{S}^{d\times d}\) be the set of symmetric \(d\times d\) matrices, and \(\mathbb{S}^{d\times d}_+\) (resp. \(\mathbb{S}^{d\times d}_{++}\)) the subset of positive-semidefinite (resp. positive-definite) matrices.
Denote by \(I_d\) (resp. \(0_{d_1\times d_2}\)) the \(d\times d\) identity matrix (resp. \(d_1\times d_2\) zero matrix); we write \(I\) and \(0\) when dimensions are clear.
Throughout,  \(\|\cdot\|\) denotes the Euclidean norm for vectors and the matrix \(2\)-norm for matrices. 
Matrix inequalities are in the positive-semidefinite sense. 
For a vector/matrix-valued function \(A:[0,T]\to \R^{d_1\times d_2}\), define \(\|A\| := \sup_{t\in[0,T]}\|A_t\|\). 
Finally, let \(x\vee y := \max\{x,y\}\) and \([N] := \{1,2,\ldots,N\}\) for \(N\in\mathbb{N}_+\).

\section{Agent's optimal control and attacked dynamics under partial observability}\label{sec:agent}
This section introduces the agent’s baseline control problem under partial observation and characterizes its optimal solution. We then incorporate adversarial perturbations into the state and observation dynamics and derive the resulting closed-loop system under the agent’s (misspecified) optimal response, which provides the basis for quantifying performance degradation under attacks.

\subsection{Model setup}
Let \((\Omega,\F,\{\F_t\}_{t\geq 0},\PP)\) be a filtered probability space supporting two independent Brownian motions \(V\) and \(W\) in \(\R^p\) and \(\R^q\), respectively, where \(\F_t := \sigma(V_s,W_s,\ \forall s\in[0,t])\) is the natural filtration.
The agent considers a partially observed linear system, in which the (unobserved) state process \(X \in \R^d\) and observation process \(Y \in \R^m\) evolve as:
\begin{align}
    \label{eqn:X}
    \ud X_t &= (A_tX_t + B_tu_t + a_t)\ud t + \sigma_V\ud V_t,\\
    \label{eqn:Y}
    \ud Y_t &= (H_tX_t + h_t)\ud t + \sigma_W\ud W_t,
\end{align}
with initial condition \(X_0\sim N(x_0,R_0)\) and \(Y_0 = 0\), where \(u \in \R^c\) is the control process, and \(V\) and \(W\) represent exogenous state and observation noises, respectively. The coefficients $(A, B, H, a, h)$ are deterministic, continuous, and of compatible dimensions. We further assume that \(d\leq p\), \(m\leq q\), \(\mathrm{rank}(\sigma_V) = d\), and \(\mathrm{rank}(\sigma_W) = m\), so that \(\sigma_V\sigma_V\transpose\) and \(\sigma_W\sigma_W\transpose\) are positive definite.

The agent chooses \(u\) to minimize the expected cost
\begin{equation}
    \label{eqn:J_agent}
    J(u) := \E \int_0^T [(X_t - r_t)\transpose Q_t (X_t - r_t) + u_t\transpose S_t u_t]\ud t ,
\end{equation}
where \(Q\in C([0,T];\mathbb{S}^{d\times d}_{+})\), \(S\in C([0,T];\mathbb{S}^{c\times c}_{++})\), and \(r\in C([0,T];\R^d)\).
This defines a linear-quadratic (LQ) tracking problem with the standard performance-energy trade-off: the agent seeks to track the reference trajectory \(r\) while penalizing excessive control effort.

Since the state $X$ is not directly observed, admissible controls must be adapted to the observation filtration, i.e., \(u_t\in \F^{Y}_t,\ \forall t\in[0,T]\). We consider the admissible class \(\mathscr{A}^a\) generated by finite-dimensional observation-driven features \(\iota\) (see \cite[Definition~5.3.1]{davis1977}), consisting of controls of the form:
\begin{equation}
    \label{eqn:adm_separation}
    u_t = K^\iota_t\iota_t + L^\iota_t,
    \quad \ud \iota_t = (\Gamma_t\iota_t + \gamma_t)\ud t + \delta_t\ud Y_t,
\end{equation}
where \(\iota_0 \in L^2(\R^{d_\iota})\) and the coefficients are measurable functions of compatible dimensions.
For any \(u\in\mathscr{A}^a\), the control \(u_t\) depends on the observation history \(Y_{[0,t]}\), reflecting the path-dependent nature of the partially observable control problem.

\subsection{Solution via the separation principle}\label{subsec:separation}
The main technical difficulty in solving \eqref{eqn:X}--\eqref{eqn:J_agent} lies in the coupling between control optimization and state estimation: the control \(u\) influences the unobserved state \(X\), which is only indirectly accessible through the observation \(Y\). 
Consequently, optimal decision-making must rely on filtered information, whose evolution is, in principle, control-dependent.

In the linear-Gaussian setting, this difficulty is resolved by the separation principle \cite[Section~5.3]{davis1977}, which allows filtering and control optimization to be treated separately: the optimal control \(u^*\in\mathscr{A}^a\) is Markovian in the filtered state, while the filtering covariance evolves independently of the control. 

To this end, define the filtered estimate \(\hat{X}_t := \E(X_t|\F^Y_t)\) and the whitened innovation process associated with equations~\eqref{eqn:X}--\eqref{eqn:Y}, 
$$\ud I_t := (\sigma_W\sigma_W\transpose)^{-\frac12}[\ud Y_t - (H_t\hat{X}_t + h_t)\ud t], \quad I_0 = 0.$$
By the separation principle \cite[Proposition~5.3.2]{davis1977} and the Kalman-Bucy filter \cite{bain2009fundamentals}, \(I\) is an \(\{\F^Y_t\}\)-adapted Brownian motion independent of the control \(u\), and \(\hat{X}\) satisfies
\begin{equation}
    \label{eqn:X_hat}
    \ud \hat{X}_t = (A_t \hat{X}_t + B_tu_t + a_t)\ud t + R_tH_t\transpose (\sigma_W\sigma_W\transpose)^{-\frac12}\ud I_t,
\end{equation}
with \(\hat{X}_0 = x_0\).
Here, the conditional covariance matrix \(R_t := \mathrm{cov}(X_t|\F^Y_t)\) solves the Riccati ODE
\begin{equation}
    \label{eqn:R}
    \dot{R}_t = A_tR_t + R_tA_t\transpose+\sigma_V\sigma_V\transpose - R_tH_t\transpose(\sigma_W\sigma_W\transpose)^{-1}H_tR_t,
\end{equation}
with initial condition \(R_0\).
The filtering equation~\eqref{eqn:X_hat} is linear, and the Riccati equation~\eqref{eqn:R} is globally well-posed by standard Riccati equation theory \cite[Theorem~4.1.6]{abou2012matrix}. 

Using the filtered state $\hat X_t$, the cost~\eqref{eqn:J_agent} decomposes as $$J(u) = \E \int_0^T [(\hat{X}_t - r_t)\transpose Q_t (\hat{X}_t - r_t) + u_t\transpose S_t u_t]\ud t  + C,$$ where \(C\) is independent of the control \(u\) (see \cite[equation~(5.50)]{davis1977}). Consequently, the partially observable control problem reduces to a fully observable Markovian LQ control problem in the filtered state \(\hat{X}\), whose dynamics are driven by the $\{\F^Y_t\}$-adapted Brownian motion \(I\). The optimal control in \(\mathscr{A}^a\) then takes the feedback form \(u_t = \phi^u(t,\hat{X}_t)\), with a linear structure specified below.

\begin{prop}
    \label{prop:solution_agent}
    For the agent's optimization~\eqref{eqn:X}--\eqref{eqn:J_agent}, the optimal control at time $t$, given  \(\hat{X}_t = x\), is
    \begin{equation}
    \label{eqn:opt_ctrl}
    u^*(t,x) = -S_t^{-1}B_t\transpose F_t x - \tfrac12 S_t^{-1}B_t\transpose \mb{f}_t,
    \end{equation}
    where \(F\in C([0,T]; \mathbb{S}^{d\times d})\) and \(\mb{f}\in C([0,T]; \R^d)\) solve
    \begin{equation}
    \label{eqn:ODE_agent}
    \begin{aligned}
        &\dot{F}_t + A_t\transpose F_t + F_tA_t + Q_t - F_tK_t F_t = 0,\\
        &\dot{\mb{f}}_t + A_t\transpose \mb{f}_t + 2F_ta_t - 2Q_tr_t - F_tK_t \mb{f}_t = 0,
    \end{aligned}
    \end{equation}
    with terminal conditions \(F_T = 0\) and \(\mb{f}_T = 0\), where \(K_t := B_tS_t^{-1}B_t\transpose\in\mathbb{S}^{d\times d}\).
    
    Moreover, the ODE system~\eqref{eqn:ODE_agent} admits a unique solution on \([0,T]\) for any \(T>0\).
\end{prop}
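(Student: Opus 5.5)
By the separation principle recalled above, it suffices to solve the fully observed LQ problem in the filtered state: minimize $\E\int_0^T[(\hat X_t-r_t)\transpose Q_t(\hat X_t-r_t)+u_t\transpose S_tu_t]\ud t$ subject to \eqref{eqn:X_hat}. Here $I$ is a Brownian motion independent of $u$, and the diffusion coefficient $R_tH_t\transpose(\sigma_W\sigma_W\transpose)^{-1/2}$ is deterministic. I would use a quadratic verification (completion-of-squares) argument with the ansatz $v(t,x)=x\transpose F_tx+x\transpose\mb f_t+g_t$. Here $g$ is a scalar function chosen to absorb the constant terms, including $\Tr(\Sigma_t\transpose F_t\Sigma_t)$ with $\Sigma_t:=R_tH_t\transpose(\sigma_W\sigma_W\transpose)^{-1/2}$.

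Writing the HJB equation
\[
\partial_tv+\min_u\{(A_tx+B_tu+a_t)\transpose\nabla v+u\transpose S_tu\}+(x-r_t)\transpose Q_t(x-r_t)+\tfrac12\Tr(\Sigma_t\Sigma_t\transpose\nabla^2v)=0,
\]
the minimizer is $u=-\tfrac12S_t^{-1}B_t\transpose\nabla v=-S_t^{-1}B_t\transpose F_tx-\tfrac12S_t^{-1}B_t\transpose\mb f_t$, which is \eqref{eqn:opt_ctrl}. Substituting it back and matching the quadratic, linear and constant terms in $x$ gives exactly the two equations in \eqref{eqn:ODE_agent}, plus a linear ODE for $g$, all with zero terminal data since there is no terminal cost. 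For verification, I would apply It\^o's formula to $v(t,\hat X_t)$ for an arbitrary $u\in\mathscr A^a$. This yields
\[
J(u)=v(0,x_0)+C+\E\int_0^T(u_t-u^*(t,\hat X_t))\transpose S_t(u_t-u^*(t,\hat X_t))\ud t .
\]
Because $S_t\succ0$, this shows optimality. The stochastic integral has zero mean because admissible controls, given by \eqref{eqn:adm_separation}, produce Gaussian-type linear dynamics with finite second moments. The feedback $u^*$ itself lies in $\mathscr A^a$, taking $\iota=\hat X$, since \eqref{eqn:X_hat} rewritten in terms of $\ud Y$ has the required linear form.

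For well-posedness, the equation for $F$ is a standard matrix Riccati equation. Reversing time with $\tau=T-t$, it becomes $\dot P=A\transpose P+PA+Q-PKP$ with $P(0)=0$, $Q\succeq0$ and $K\succeq0$. Local existence and uniqueness follow from local Lipschitz continuity, and symmetry is preserved. The main obstacle is to rule out blow-up before time $T$, for which I would establish two-sided a priori bounds. For the lower bound, $F_t\succeq0$ holds because $x\transpose F_tx$ is the optimal cost-to-go of a deterministic LQ problem with nonnegative cost. Alternatively, one can write $F$ as the solution of a linear Lyapunov equation with closed-loop matrix $A-KF$ and source $Q+FKF\succeq0$. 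For the upper bound, $x\transpose F_tx$ is at most the cost of the zero control, i.e.\ $F_t\preceq\int_t^T\Phi(s,t)\transpose Q_s\Phi(s,t)\ud s$, where $\Phi$ is the state transition matrix of $A$. Hence $\|F\|$ stays bounded and the solution extends to all of $[0,T]$; this is the same reasoning as the cited \cite[Theorem~4.1.6]{abou2012matrix}.

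Once $F$ is known and continuous, the equation for $\mb f$ is a linear ODE with continuous coefficients, $\dot{\mb f}=-(A-KF)\transpose\mb f-2Fa+2Qr$, using $F=F\transpose$. It therefore has a unique global solution, and the same holds for $g$. Uniqueness of the whole system then follows from uniqueness for the Riccati equation together with linearity of the remaining equations.
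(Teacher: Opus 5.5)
Your proposal is correct and follows the same route as the paper: reduction to the fully observed LQ problem in $\hat X$ via the separation principle, the HJB equation with the quadratic ansatz $x\transpose F_tx+x\transpose\mb f_t+c_t$, coefficient matching to obtain \eqref{eqn:ODE_agent}, and well-posedness from standard Riccati theory. You go somewhat beyond the paper's proof by also giving the completion-of-squares verification, and by spelling out the two-sided a priori bounds $0\preceq F_t\preceq\int_t^T\Phi(s,t)\transpose Q_s\Phi(s,t)\ud s$ that the paper leaves to its citation of Abou-Kandil et al.\ (Theorem~4.1.6).
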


Notably, the optimal control~\eqref{eqn:opt_ctrl} exhibits a certainty-equivalence structure: it uses the conditional mean \(\hat{X}_t\) as a surrogate for the unobserved state \(X_t\) and applies the optimal feedback law of the corresponding fully observed LQ control problem.

\subsection{Attacked dynamics and performance degradation}\label{sec:degrad}
We now incorporate an adversary into the partially observable control system, namely, an attacker that seeks to degrade the agent's performance by perturbing both the state and observation channels in dynamics~\eqref{eqn:X}--\eqref{eqn:Y}.

The corrupted state-observation processes \((X^c,Y^c)\), induced by the attack tuple \((\rho,\tau)\) and the agent's implemented control process \(u^a\), evolve according to
\begin{align}
    \label{eqn:Xc}
    \ud X^c_t &= (A_tX^c_t + B_tu^a_t+ a_t + \rho_t)\ud t + \sigma_V\ud V_t,\\
    \label{eqn:Yc}
    \ud Y^c_t &= (H_tX^c_t + h_t + \tau_t)\ud t + \sigma_W\ud W_t,
\end{align}
with \(X^c_0\sim N(x_0,R_0)\) and \(Y^c_0 = 0\). We assume that the attack processes satisfy the following measurability and integrability conditions: for any $t\in[0,T]$,
\begin{equation}
    \label{eqn:cond_rho_tau}
    \rho_t,\tau_t\in\F^{Y^c}_t,\ \ \E \int_0^T \|\rho_t\|^2\vee \|\tau_t\|^2\ud t<\infty.
\end{equation}
Here, \(\rho\) and \(\tau\) represent state- and observation-channel attacks, respectively:  \(\rho\) directly perturbs the state evolution while \(\tau\) biases sensor readings.
The adaptiveness with respect to the corrupted observation filtration $\{\F^{Y^c}_t\}$ reflects information constraint, that both the agent and the attacker observe only $Y^c$.

The agent is rational but non-strategic and vulnerable: it does not account for the possible presence of an attacker and acts under the belief that the system is secure \((\rho\equiv\tau\equiv 0)\). It therefore treats the observed path of \(Y^c\) as a genuine measurement outcome and applies the optimal feedback policy~\eqref{eqn:opt_ctrl} to its filtered estimate, leading to
\begin{equation}
    \label{eqn:ua}
    u^a_t := u^*(t,\hat{X}^a_t),
\end{equation}
where $\hat X^a$ is computed from $Y^c$ under the attack-free model. Namely, let $X^a$ be the state process in \eqref{eqn:Xc}--\eqref{eqn:Yc} with $\rho \equiv \tau \equiv 0$. Then $\hat{X}^a_t := \E(X^a_t|\F^{Y^c}_t)$ evolves 
\begin{multline}
    \label{eqn:Xa}
    \ud \hat{X}^a_t = (A_t \hat{X}^a_t + B_tu^a_t +  a_t)\ud t
    + R_tH_t\transpose \\(\sigma_W\sigma_W\transpose)^{-1}(\ud Y^c_t - (H_t\hat{X}^a_t + h_t)\ud t), \; \hat{X}^a_0 = x_0.
\end{multline}
In contrast, the attacker knows the agent's primary control objective and can anticipate the agent's optimal response \(u^*\), but does not directly observe the agent’s level of strategic awareness; in particular, it does not know whether the agent actively detect attacks. The attacker thereby aims to degrade the agent's closed-loop performance without altering the intrinsic randomness of the environment.  Let \(\hat{X}^c_t := \E(X^c_t|\F^{Y^c}_t)\) be the attack-aware filtered state. By the separation principle \cite{davis1977} and the Kalman-Bucy filter \cite{bain2009fundamentals}, it satisfies
\begin{multline}
    \label{eqn:X_hat_c}
    \ud \hat{X}^c_t = (A_t \hat{X}^c_t + B_tu^a_t + a_t + \rho_t)\ud t + R_tH_t\transpose \\(\sigma_W\sigma_W\transpose)^{-1}(\ud Y^c_t - (H_t\hat{X}^c_t + h_t + \tau_t)\ud t),
\end{multline}
with \(\hat{X}^c_0 = x_0\).
This is the filtered estimate that would be obtained by an observer who correctly accounts for the attack signals \(\rho\) and \(\tau\).

Importantly, the discrepancy \(\Delta X_t := \hat{X}^c_t - \hat{X}^a_t\) captures the informational asymmetry caused by the attack: \(\hat X^c\) is the attack-aware estimate, whereas \(\hat X^a\) is the estimate maintained by the vulnerable agent. 
This misalignment is exploited by the attacker to alter the agent's feedback behavior (cf.~\eqref{eqn:ua}).
Subtracting equation~\eqref{eqn:Xa} from equation~\eqref{eqn:X_hat_c} yields the dynamics of \(\Delta X\).
\begin{prop}
    \label{prop:DeltaX}
    The discrepancy process \(\Delta X_t\) satisfies
    \begin{multline}
        \label{eqn:Delta_X}
        \ud \Delta X_t = \Big([A_t - R_tH_t\transpose (\sigma_W\sigma_W\transpose)^{-1}H_t]\Delta X_t \\+ [\rho_t - R_tH_t\transpose (\sigma_W\sigma_W\transpose)^{-1}\tau_t]\Big)\ud t,\quad \Delta X_0 = 0.
    \end{multline}
    Under condition~\eqref{eqn:cond_rho_tau}, equation~\eqref{eqn:Delta_X} admits a unique \(\{\F^{Y^c}_t\}\)-adapted solution on \([0,T]\), almost surely.
    In particular, if \(\rho\) and \(\tau\) are deterministic, then \(\Delta X\) is deterministic as well.
\end{prop}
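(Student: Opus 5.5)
The plan is to subtract the filtering equation~\eqref{eqn:Xa} from~\eqref{eqn:X_hat_c} term by term and observe that all stochastic terms cancel. Both equations are driven by the same increment \(\ud Y^c_t\). They also use the same gain \(R_tH_t\transpose(\sigma_W\sigma_W\transpose)^{-1}\), since \(R\) solves the control- and attack-independent Riccati ODE~\eqref{eqn:R}. Finally, both contain the same drift contributions \(B_tu^a_t + a_t\), because the implemented control \(u^a\) enters both the attack-aware and the agent's filter identically.

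Writing \(G_t := R_tH_t\transpose(\sigma_W\sigma_W\transpose)^{-1}\), the difference has drift
\[
A_t\Delta X_t + \rho_t - G_tH_t\Delta X_t - G_t\tau_t ,
\]
and the \(\ud Y^c_t\) and \(h_t\) terms cancel. This gives exactly~\eqref{eqn:Delta_X}, with \(\Delta X_0 = x_0 - x_0 = 0\). One point to confirm is that~\eqref{eqn:X_hat_c} is indeed the correct Kalman–Bucy filter even though \(\rho,\tau\) are random but \(\F^{Y^c}\)-adapted. Since the paper states~\eqref{eqn:X_hat_c} as given, I will take it as the starting point and not reprove it.

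For well-posedness, I will treat~\eqref{eqn:Delta_X} pathwise as a linear random ODE \(\dot z = M_t z + b_t(\omega)\). Here \(M_t := A_t - G_tH_t\) is deterministic and continuous on \([0,T]\), because \(A\), \(H\) and \(R\) are continuous (the last by global well-posedness of~\eqref{eqn:R}). The forcing is \(b_t := \rho_t - G_t\tau_t\). By~\eqref{eqn:cond_rho_tau} and Fubini, \(\E\int_0^T\|b_t\|^2\ud t<\infty\), so almost surely \(b_\cdot(\omega)\in L^2([0,T])\subset L^1([0,T])\).

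Let \(\Phi(t,s)\) be the fundamental matrix of \(M\). For each such \(\omega\), the Carathéodory ODE has the unique absolutely continuous solution
\[
\Delta X_t = \int_0^t \Phi(t,s)\,b_s\ud s .
\]
Uniqueness follows from Grönwall's inequality applied to the difference of two solutions, which solves \(\dot e = M_t e\) with \(e_0=0\).

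Adaptedness comes from this variation-of-constants formula. The integrand \(\Phi(t,s)b_s\) on \(s\le t\) involves only \(\rho_s,\tau_s\), which are \(\F^{Y^c}_s\subset\F^{Y^c}_t\)-measurable. The integral is therefore \(\F^{Y^c}_t\)-measurable, given progressive measurability of \(\rho,\tau\), which I would assume or obtain by taking progressively measurable versions. The deterministic claim is immediate: if \(\rho,\tau\) are deterministic, then \(b\) is deterministic and so is the integral.

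The main obstacle is only technical: justifying measurability and adaptedness of the pathwise solution, together with the a.s. integrability of \(b\). I expect to handle it with the explicit formula above, and otherwise the argument is a direct computation.
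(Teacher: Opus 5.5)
Your proposal is correct and follows the paper's route. The paper also obtains \eqref{eqn:Delta_X} simply by subtracting \eqref{eqn:Xa} from \eqref{eqn:X_hat_c}, where the common $\ud Y^c_t$, $h_t$, gain $R_tH_t\transpose(\sigma_W\sigma_W\transpose)^{-1}$ and $B_tu^a_t + a_t$ terms cancel. Your pathwise variation-of-constants argument for existence, uniqueness, $\{\F^{Y^c}_t\}$-adaptedness and determinism supplies the standard well-posedness details that the paper leaves implicit.
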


The whitened innovation process \(I^a\) associated with the agent's filter~\eqref{eqn:Xa} is defined by
\begin{equation}
    \label{eqn:Ia}
    \hspace{-0.5em}\ud I^a_t := (\sigma_W\sigma_W\transpose)^{-\frac{1}{2}}[\ud Y^c_t - (H_t\hat{X}^a_t + h_t)\ud t],\ I^a_0 = 0.
\end{equation}
It represents the normalized prediction error between the corrupted observation increment and the increment predicted by the agent's filter.
Plugging equation~\eqref{eqn:Yc} into equation~\eqref{eqn:Ia} yields the equivalent representation
\begin{multline}
    \label{eqn:I^a_W}
    \ud I^a_t = (\sigma_W\sigma_W\transpose)^{-\frac{1}{2}}[H_t(X^c_t - \hat{X}^a_t) + \tau_t]\ud t \\+ (\sigma_W\sigma_W\transpose)^{-\frac{1}{2}}\sigma_W\ud W_t.
\end{multline}
By construction, the associated filtrations satisfy \(\F^{\hat{X}^a}_t\subset \F^{I^a}_t = \F^{Y^c}_t\subset \F_t\).
The equality $\F^{I^a}_t = \F^{Y^c}_t$ follows because $I^a$ is constructed from $Y^c$, while $Y^c$ can be recovered from $I^a$ through \eqref{eqn:Ia} and the filtering equation \eqref{eqn:Xa}.
In particular, the innovation path \(I^a\) is available to the agent via \eqref{eqn:Ia}  and can be used to assess whether the observed measurements are statistically consistent with the attack-free model.

\begin{rem}[Universal filtering covariance]
    \label{rem:R}
    From Kalman-Bucy filtering theory \cite{bain2009fundamentals}, the conditional covariance matrices $\mathrm{cov}(X^a_t|\F^{Y^c}_t)$ and $\mathrm{cov}(X^c_t|\F^{Y^c}_t)$ both solve the same Riccati equation~\eqref{eqn:R}. This equation is independent of both the attack tuple \((\rho,\tau)\) and the control \(u\). Hence, once the model coefficients are fixed, $R_t$ can be solved without knowledge of $\rho, \tau, u$, and is treated as a fixed model coefficient in the following context.   
    With a slight abuse of notation, we use $R_t$ to denote the conditional covariance matrix for both $X^c$ and $X^a$.
\end{rem}

Anticipating the agent's use of \(u^a\) defined in \eqref{eqn:ua}, the attacker can evaluate the degradation of the agent’s primary task performance~\eqref{eqn:J_agent} through the induced cost.

\begin{defn}
    \label{defn:degrade}
    For a given attack tuple \((\rho,\tau)\), define
    \begin{equation}
        \label{eqn:def_D}
        \mc{D}(\rho,\tau) := \E \int_0^T [(X^c_t - r_t)\transpose Q_t (X^c_t - r_t) + (u^a_t)\transpose S_t u^a_t]\ud t.
    \end{equation}
     The actual performance degradation relative to the attack-free baseline is $\mc{D}(\rho, \tau)- \mc{D}(0,0)$. Since \(\mc{D}(0,0)\) is independent of $(\rho, \tau)$, subtracting it does not affect the optimal attack design developed in Sections~\ref{sec:det}--\ref{sec:adpt}. We therefore use $\mc{D}(\rho, \tau)$ as the performance degradation metric: a larger value of \(\mc{D}(\rho,\tau)\) corresponds to a more effective attack.
\end{defn}

\section{Detection framework and stealthiness characterization}\label{sec:detector} 

In this section, we consider the attacked dynamics~\eqref{eqn:Xc}--\eqref{eqn:Ia} and propose a likelihood-based detection mechanism.
The detector is constructed via a Girsanov-type argument and leads to a quantitative notion of attack stealthiness. We also interpret the detector and relate it to classical innovation-based detection methods.

\subsection{Likelihood-based detection and stealthiness}\label{subsec:detect}

Without prior knowledge about the agent’s strategic awareness , the attacker assumes that the agent is rational but vulnerable. At the same time, the attacker takes precautionary measures to ensure the stealthiness of its adversarial interventions, in case the agent employs a detection mechanism to monitor for evidence of attacks.

To quantify the stealthiness of attacks, a detection mechanism must be specified \textit{a priori}.
In this work, we adopt a likelihood-based detector constructed from the innovation process.
Given a sample path of \(I^a\), the log-likelihood of observing this trajectory under a candidate attack tuple \((\rho,\tau)\) provides a natural measure of statistical evidence for the presence of attacks. It forms the basis of our stealthiness criterion. The existence and explicit expression of this log-likelihood are given below.

\begin{prop}
    \label{prop:detection}
    Under condition~\eqref{eqn:cond_rho_tau}, the law of $I^a$ is absolutely continuous with respect to that of the $\R^m$-valued Brownian motion \(W^m:= (\sigma_W\sigma_W\transpose)^{-\frac{1}{2}}\sigma_W W\), i.e., \(\mu_{I^a}\ll\mu_{W^m}\). The log-likelihood $\ell(I^a;\rho,\tau) := \log\frac{\ud \mu_{I^a}}{\ud \mu_{W^m}}(I^a)$ is thus well-defined and admits the representation:
    \begin{align}
    \label{eqn:log_lkhd}
     &\ell(I^a;\rho,\tau)
     = \int_0^T (H_t\Delta X_t + \tau_t)\transpose (\sigma_W\sigma_W\transpose)^{-\frac{1}{2}}\ud I^a_t\\
     &- \frac{1}{2}\int_0^T (H_t\Delta X_t + \tau_t)\transpose (\sigma_W\sigma_W\transpose)^{-1}(H_t\Delta X_t + \tau_t)\ud t.
    \end{align}
\end{prop}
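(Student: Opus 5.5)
Since $I^a$ is an It\^o process, I plan to obtain its law by computing its semimartingale decomposition with respect to its own filtration and then applying the Liptser--Shiryaev theorem on absolute continuity of laws of diffusion-type processes with respect to Wiener measure.

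The first step is to project the drift of \eqref{eqn:I^a_W} onto $\{\F^{Y^c}_t\}=\{\F^{I^a}_t\}$. Since $\rho_t,\tau_t,\hat X^a_t$ are $\F^{Y^c}_t$-measurable,
\[
\E\big[H_t(X^c_t-\hat X^a_t)+\tau_t \,\big|\, \F^{Y^c}_t\big] = H_t(\hat X^c_t - \hat X^a_t)+\tau_t = H_t\Delta X_t + \tau_t .
\]
Let $\beta_t := (\sigma_W\sigma_W\transpose)^{-1/2}(H_t\Delta X_t+\tau_t)$. By the innovation theorem (as in the Kalman--Bucy filter for \eqref{eqn:X_hat_c}), the process
\[
\bar W_t := I^a_t - \int_0^t \beta_s\ud s = \int_0^t (\sigma_W\sigma_W\transpose)^{-1/2}\big[\ud Y^c_s - (H_s\hat X^c_s+h_s+\tau_s)\ud s\big]
\]
is an $\{\F^{Y^c}_t\}$-Brownian motion in $\R^m$. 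Its quadratic variation is $(\sigma_W\sigma_W\transpose)^{-1/2}\sigma_W\sigma_W\transpose(\sigma_W\sigma_W\transpose)^{-1/2}t=I_m t$, so $\bar W$ has the same law as $W^m$. Thus $\ud I^a_t=\beta_t\ud t+\ud\bar W_t$, where $\beta$ is adapted to the filtration of $I^a$ itself. This makes $I^a$ a diffusion-type process in the sense of Liptser--Shiryaev.

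The second step is to check the integrability needed for the absolute-continuity theorem. Under \eqref{eqn:cond_rho_tau}, $\E\int_0^T\|\tau_t\|^2\ud t<\infty$. By Proposition~\ref{prop:DeltaX}, $\Delta X$ solves a linear ODE with continuous coefficients and a square-integrable forcing term, so Gronwall gives $\sup_t\|\Delta X_t\|^2\le C\int_0^T(\|\rho\|^2+\|\tau\|^2)\ud t$. Hence $\int_0^T\|\beta_t\|^2\ud t<\infty$ a.s., and in fact it has finite expectation. The Liptser--Shiryaev theorem (Theorem~7.7 in \emph{Statistics of Random Processes~I}) states that if $\PP(\int_0^T\|\beta_t\|^2\ud t<\infty)=1$, then $\mu_{I^a}\ll\mu_{W^m}$. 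The density evaluated along the path is
\[
\frac{\ud\mu_{I^a}}{\ud\mu_{W^m}}(I^a)=\exp\Big(\int_0^T\bar\beta_t\transpose\ud I^a_t-\tfrac12\int_0^T\|\bar\beta_t\|^2\ud t\Big),
\]
where $\bar\beta_t=\E[\beta_t\mid\F^{I^a}_t]$. In our case $\bar\beta_t=\beta_t$, because $\beta$ is already $\F^{I^a}_t$-measurable. Substituting $\beta_t$ and expanding $\|\beta_t\|^2=(H_t\Delta X_t+\tau_t)\transpose(\sigma_W\sigma_W\transpose)^{-1}(H_t\Delta X_t+\tau_t)$ gives \eqref{eqn:log_lkhd} after taking logarithms.

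The main obstacle is justifying the innovation claim in the attacked setting. The attack processes are feedback in $Y^c$, so it must be confirmed that the Kalman--Bucy filter \eqref{eqn:X_hat_c} remains valid and that $\bar W$ is a Brownian motion. I would handle this in one of two ways. The first is to note that $\rho,\tau,u^a$ are $\F^{Y^c}$-adapted inputs, so the conditionally Gaussian filtering results of Liptser--Shiryaev (Ch.~12) apply directly. The second is to verify directly that $\bar W$ is a continuous $\F^{Y^c}$-martingale with quadratic variation $I_m t$ and then invoke L\'evy's characterization. The martingale property follows from the tower property, since $\E[X^c_s-\hat X^c_s\mid\F^{Y^c}_u]=0$ for $u\le s$ and $W$ is independent of $\F^{Y^c}_u$-increments appropriately. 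A secondary subtlety is that the theorem's representation of the density as a functional of the canonical path requires $\beta_t$ to be a nonanticipative functional of $I^a$. This holds because $\F^{I^a}_t=\F^{Y^c}_t$, as noted after \eqref{eqn:I^a_W}.
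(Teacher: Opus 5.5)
Your proposal is correct and follows the same route as the paper. Both arguments rewrite $I^a$ in its own filtration $\{\F^{I^a}_t\}=\{\F^{Y^c}_t\}$ as a diffusion-type process with drift $(\sigma_W\sigma_W\transpose)^{-1/2}(H_t\Delta X_t+\tau_t)$ via the innovation theorem, check that this drift is a.s.\ square-integrable, and then apply the Liptser--Shiryaev absolute-continuity/density theorem. The differences are cosmetic: you identify the innovation Brownian motion explicitly as the attack-aware filter's innovation and would verify its martingale property by hand (the conditional Fubini step there implicitly uses $\E\|X^c_t-\hat X^c_t\|^2=\mathrm{Tr}(R_t)<\infty$), whereas the paper cites the general innovation theorem after checking $\int_0^T\E\|\beta_t\|\ud t<\infty$ for the unprojected drift.
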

A larger value of \(\ell\) provides stronger statistical evidence that the observed innovation \(I^a\) is generated under the attacks \((\rho,\tau)\).
In the attack-free case,  i.e., \(\rho\equiv\tau \equiv 0\), one has \(\Delta X\equiv 0\) by equation~\eqref{eqn:Delta_X}, and hence \(\ell(I^a;\rho,\tau) = 0\) for all innovation trajectories.

The evaluation of \(\ell\) requires specifying a candidate attack tuple \((\rho,\tau)\).
While an attack-aware defender can compute \(\ell\) for different candidates, the true attack is unknown and must be inferred.
Although this paper focuses on the attacker’s design problem, where \((\rho,\tau)\) is treated as known, Remark~\ref{rem:interp_ell} elaborates how \(\ell\) can be used to construct a detector for an attack-aware defender based on the generalized likelihood ratio test (GLRT).

\begin{rem}
    \label{rem:interp_ell}
    For an attack-aware defender, a classical approach to attack detection is GLRT under the null hypothesis \(H_0: \rho\equiv\tau\equiv 0\) versus the alternative \(H_1: \rho\not\equiv 0 \text{ or }\tau\not\equiv 0\).
    Based on \(I^a\), the agent computes 
    estimators of attacks \(\hat{\rho}\) and \(\hat{\tau}\), leading to the log-generalized likelihood ratio (GLR) statistic \(\log\mathrm{GLR} := \ell(I^a;\hat{\rho},\hat{\tau}) - \ell(I^a;\rho\equiv 0,\tau\equiv 0)\).
    The null hypothesis \(H_0\) is rejected (i.e., the presence of attacks is confirmed) whenever \(\log \mathrm{GLR}\) exceeds a prescribed threshold.
\end{rem}

From the attacker’s perspective, treating \((\rho,\tau)\) as known, a smaller value of \(\ell(I^a;\rho,\tau)\) generally corresponds to reduced detectability of the attacks. This observation motivates the following notion of stealthiness, which captures average rather than pathwise detectability and yields a tractable stealthiness criterion.

\begin{defn}
    \label{defn:S}
    For a given attack tuple \((\rho,\tau)\), define its stealthiness as
    \begin{equation}
        \label{eqn:def_S}
        \mc{S}(\rho,\tau) := \E [\ell(I^a;\rho,\tau)].
    \end{equation}
\end{defn}
\begin{prop}
    \label{prop:stealthy}
    Under condition~\eqref{eqn:cond_rho_tau},
    \begin{multline}
        \label{eqn:stealthy_detector}
        \mc{S}(\rho,\tau) = \frac{1}{2}\E \int_0^T (H_t\Delta X_t + \tau_t)\transpose\\
        (\sigma_W\sigma_W\transpose)^{-1}(H_t\Delta X_t + \tau_t)\ud t.
    \end{multline}
\end{prop}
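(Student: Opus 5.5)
Starting from the representation of Proposition~\ref{prop:detection}, write $\beta_t := H_t\Delta X_t + \tau_t$ and $\Sigma := \sigma_W\sigma_W\transpose$. Taking expectations gives
\[
\mc{S}(\rho,\tau) = \E\int_0^T \beta_t\transpose \Sigma^{-\frac12}\ud I^a_t - \tfrac12\E\int_0^T \beta_t\transpose\Sigma^{-1}\beta_t\ud t .
\]
So the task is to evaluate the stochastic-integral term. The idea is that it equals $\E\int_0^T\beta_t\transpose\Sigma^{-1}\beta_t\ud t$, because under the true (attacked) dynamics the drift of $I^a$ is exactly $\Sigma^{-\frac12}\beta_t$ plus a martingale part.

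First, decompose $\ud I^a_t$ using the attack-aware filter. Write $X^c_t - \hat{X}^a_t = (X^c_t - \hat{X}^c_t) + \Delta X_t$ in \eqref{eqn:I^a_W}. This gives
\[
\ud I^a_t = \Sigma^{-\frac12}\beta_t\ud t + \ud \nu_t,
\]
where $\nu$ is the innovation associated with the attack-aware filter \eqref{eqn:X_hat_c}:
\[
\ud\nu_t := \Sigma^{-\frac12}\bigl[\ud Y^c_t - (H_t\hat{X}^c_t + h_t+\tau_t)\ud t\bigr].
\]
By Kalman--Bucy filtering theory applied to \eqref{eqn:Xc}--\eqref{eqn:Yc}, as invoked for \eqref{eqn:X_hat_c} and Remark~\ref{rem:R}, $\nu$ is an $\{\F^{Y^c}_t\}$-Brownian motion in $\R^m$. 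Substituting,
\[
\int_0^T \beta_t\transpose\Sigma^{-\frac12}\ud I^a_t = \int_0^T \beta_t\transpose\Sigma^{-1}\beta_t\ud t + \int_0^T\beta_t\transpose\Sigma^{-\frac12}\ud\nu_t .
\]

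Second, show the last stochastic integral has zero expectation. The integrand $\beta$ is $\{\F^{Y^c}_t\}$-adapted, since $\tau$ is by \eqref{eqn:cond_rho_tau} and $\Delta X$ is by Proposition~\ref{prop:DeltaX}. It is then enough to check $\E\int_0^T\|\beta_t\|^2\ud t<\infty$, which makes the integral a true square-integrable martingale. For $\tau$ this is assumed directly. For $\Delta X$, the linear ODE \eqref{eqn:Delta_X} has continuous coefficients, so variation of constants together with Cauchy--Schwarz gives
\[
\sup_t\|\Delta X_t\|^2 \le C\int_0^T(\|\rho_s\|^2+\|\tau_s\|^2)\ud s .
\]
Taking expectations and using \eqref{eqn:cond_rho_tau} yields the bound. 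Combining the two steps gives $\mc{S} = \E\int\beta\transpose\Sigma^{-1}\beta\ud t - \frac12\E\int\beta\transpose\Sigma^{-1}\beta\ud t$, which is \eqref{eqn:stealthy_detector}.

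The main obstacle is justifying that $\nu$ is a Brownian motion with respect to $\{\F^{Y^c}_t\}$ when $\rho,\tau$ are adaptive. In that case the drift of $Y^c$ depends on its own past, so the standard Kalman--Bucy argument needs care. I would handle this directly via the innovations theorem. With $\hat{X}^c_t = \E(X^c_t|\F^{Y^c}_t)$ and $\tau_t$ $\F^{Y^c}_t$-measurable, $\Sigma^{\frac12}\nu_t = Y^c_t - \int_0^t \E[H_sX^c_s+h_s+\tau_s|\F^{Y^c}_s]\ud s$ is a continuous $\{\F^{Y^c}_t\}$-martingale. Its quadratic variation is $\Sigma t$, so L\'evy's characterization applies; this requires only $\E\int\|X^c_t\|^2\ud t<\infty$, which follows from linearity of \eqref{eqn:Xc} and the square-integrability assumptions. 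This route also avoids needing the Gaussian closed form of the filter.
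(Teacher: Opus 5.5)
Your argument is correct and is essentially the paper's proof. The paper substitutes \eqref{eqn:I^a_W} into \eqref{eqn:log_lkhd}, drops the mean-zero $\ud W_t$ integral, and uses the tower property to replace $H_t(X^c_t-\hat{X}^a_t)+\tau_t$ by $H_t\Delta X_t+\tau_t$; this is exactly the cancellation you obtain by regrouping $H_t(X^c_t-\hat{X}^c_t)\ud t+\sigma_W\ud W_t$ into the attack-aware innovation $\nu$, which is the process $\BAR{W}$ in the proof of Lemma~\ref{lem:lkhd}, and you additionally make explicit the bound $\E\int_0^T\|H_t\Delta X_t+\tau_t\|^2\ud t<\infty$ that the paper leaves implicit (it follows from the estimates in the proof of Proposition~\ref{prop:detection}).
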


\subsection{An innovation-based detector}
The likelihood-based detector proposed in Section~\ref{subsec:detect} belongs to the broader class of innovation-based detectors \cite[Section~3.2.1]{zhang2021survey}. These detectors are motivated by a classical result in stochastic filtering: under the attack-free model, the innovation process \(I^a\) is an \(\{\F^{Y^c}_t\}\)-adapted Brownian motion  \cite{bain2009fundamentals}.

By comparing equation~\eqref{eqn:I^a_W} with the log-likelihood~\eqref{eqn:log_lkhd}, our proposed detector can be interpreted as identifying drift distortions in the observed sample path of \(I^a\), where attacks manifest themselves through the term \(H_t\Delta X_t+\tau_t\). In this sense, deviations from the Brownian structure of the innovation process serve as evidence of adversarial perturbations.

We next characterize the detectability of general innovation-based detectors in Proposition~\ref{prop:innov_BM}, highlighting the distinct roles played by the two attack components in Remark~\ref{rem:detactability}.
By situating our likelihood-based detector within this broader framework, we gain a clearer understanding of its capabilities and limitations.

\begin{prop}
    \label{prop:innov_BM}
    The innovation process \(I^a\) is an \(\{\F^{Y^c}_t\}\)-adapted Brownian motion if and only if 
    $
    \int_s^t \E(H_u\Delta X_u + \tau_u| \F^{Y^c}_s)\ud u = 0,\ \forall 0\leq s\leq t\leq T.
    $
    In particular, if \(\rho\) and \(\tau\) are deterministic, and \(A_{t_1}\) commutes with \(A_{t_2}\) for any \(t_1,t_2\in[0,T]\), this condition is equivalent to 
    \begin{equation}
        \label{eqn:innov_cond}
        H_te^{\int_0^t A_u\ud u}\int_0^t e^{-\int_0^s A_u\ud u} \rho_s\ud s + \tau_t = 0,\ \forall t\in[0,T].
    \end{equation}
\end{prop}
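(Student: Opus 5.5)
Writing $\ud I^a_t = (\sigma_W\sigma_W\transpose)^{-\frac12}[H_t(X^c_t-\hat X^a_t)+\tau_t]\ud t + \ud W^m_t$ as in \eqref{eqn:I^a_W}, the plan is to split off an $\{\F^{Y^c}_t\}$-Brownian motion. Set $\ud \tilde I_t := (\sigma_W\sigma_W\transpose)^{-\frac12}[\ud Y^c_t - (H_t\hat X^c_t + h_t+\tau_t)\ud t]$, the innovation of the attack-aware filter \eqref{eqn:X_hat_c}. By Kalman--Bucy filtering for the attacked system (the attacks are $\F^{Y^c}$-adapted, so they act as known inputs), $\tilde I$ is an $\{\F^{Y^c}_t\}$-Brownian motion. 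Since $\hat X^c - \hat X^a = \Delta X$, we get
\begin{equation}
\ud I^a_t = \ud \tilde I_t + (\sigma_W\sigma_W\transpose)^{-\frac12}(H_t\Delta X_t + \tau_t)\ud t .
\end{equation}
So $I^a$ is a continuous $\{\F^{Y^c}_t\}$-semimartingale with martingale part $\tilde I$ and drift $G_t := (\sigma_W\sigma_W\transpose)^{-\frac12}(H_t\Delta X_t+\tau_t)$. Here $G$ is adapted by Proposition~\ref{prop:DeltaX} and square integrable by \eqref{eqn:cond_rho_tau} together with linear bounds on $\Delta X$.

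For the sufficiency direction I would use Lévy's characterization. The quadratic variation of $I^a$ is $tI_m$, since the drift has finite variation. It therefore suffices that $I^a$ be an $\{\F^{Y^c}_t\}$-martingale, i.e.\ $\E(I^a_t - I^a_s\mid\F^{Y^c}_s) = \int_s^t \E(G_u\mid \F^{Y^c}_s)\ud u = 0$. Conditional Fubini applies by square integrability, and multiplying by the invertible matrix $(\sigma_W\sigma_W\transpose)^{\frac12}$ gives exactly the stated condition. For necessity, if $I^a$ is an $\{\F^{Y^c}_t\}$-Brownian motion, its increments have zero conditional mean given $\F^{Y^c}_s$. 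The same identity then forces the condition.

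For the deterministic case, $\Delta X$ and $\tau$ are deterministic, so the conditional expectations drop out. The condition becomes $\int_s^t (H_u\Delta X_u+\tau_u)\ud u = 0$ for all $s\le t$. Since the integrand is continuous (assuming continuous or at least a.e.-defined attacks), this is equivalent to $H_t\Delta X_t + \tau_t = 0$ for all $t$ (a.e.\ if the attacks are only measurable).

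It remains to solve \eqref{eqn:Delta_X} explicitly, and this is where I expect the real care to be needed. The equation's matrix is $A_t - R_tH_t\transpose(\sigma_W\sigma_W\transpose)^{-1}H_t$, not $A_t$, and its forcing is $\rho_t - R_tH_t\transpose(\sigma_W\sigma_W\transpose)^{-1}\tau_t$. I would impose the constraint $\tau = -H\Delta X$ directly in the ODE. The two correction terms cancel, since $-RH\transpose(\sigma_W\sigma_W\transpose)^{-1}H\Delta X - RH\transpose(\sigma_W\sigma_W\transpose)^{-1}\tau = 0$. This leaves $\dot{\Delta X}_t = A_t\Delta X_t + \rho_t$ with $\Delta X_0=0$. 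Under the commutativity assumption, the fundamental matrix is $e^{\int_s^t A_u\ud u}$, so $\Delta X_t = e^{\int_0^t A_u\ud u}\int_0^t e^{-\int_0^s A_u\ud u}\rho_s\ud s$, and substituting gives \eqref{eqn:innov_cond}.

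For the converse, suppose \eqref{eqn:innov_cond} holds. Define $Z_t := e^{\int_0^t A_u\ud u}\int_0^t e^{-\int_0^s A_u\ud u}\rho_s\ud s$, so that $\tau = -HZ$. Then $Z$ solves \eqref{eqn:Delta_X}, because the correction terms again cancel. By uniqueness in Proposition~\ref{prop:DeltaX}, $\Delta X = Z$, hence $H\Delta X + \tau = 0$. This cancellation-plus-uniqueness argument is the key step.
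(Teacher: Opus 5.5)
Your proof is correct and follows essentially the same route as the paper. Lévy's characterization reduces the Brownian property to the $\{\F^{Y^c}_t\}$-martingale property, and the conditional mean of the increments gives the stated integral condition; splitting off the attack-aware innovation is just a repackaging of the paper's direct tower-property computation with the independent $W$-increments. In the deterministic case, the constraint $\tau=-H\Delta X$ cancels the $\Theta$-terms, so $\dot{\Delta X}_t=A_t\Delta X_t+\rho_t$ is solved by the commuting exponential. Your explicit uniqueness argument for the converse implication, and your a.e.\ caveat for merely measurable $\tau$, make precise two steps that the paper leaves implicit.
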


\begin{rem}[Detectability of \(\rho\) \textit{vs.} \(\tau\)]
    \label{rem:detactability}
    Let \(A_t\equiv A\) be constant.
    For deterministic attacks \(\rho\) and \(\tau\) satisfying~\eqref{eqn:innov_cond}, innovation-based detectors fail to detect the corresponding perturbations.
    The two attack components exhibit distinct detectability properties. 
    When \(\rho \equiv 0\), condition~\eqref{eqn:innov_cond} holds if and only if \(\tau\equiv 0\), whereas the converse does not hold.
    In particular, pure state attacks $\rho$ may remain undetectable, while pure observation attacks $\tau$ are always detectable. This asymmetry indicates that state attacks are intrinsically more stealthy than observation attacks under innovation-based detection.
\end{rem}

\begin{rem}[Comparison with the \(\chi^2\)-detector]
\label{rem:compare}
A classical innovation-based detector is the \(\chi^2\)-detector \cite[Section~7]{willsky1976survey}.
Its construction is motivated by the fact that, under the attack-free model, \(I^a\) is an \(\{\F^{Y^c}_t\}\)-adapted Brownian motion, and over a time window \([t,t+wh]\) with step size \(h\), the statistic 
$$
\chi^2_{[t,t+wh]} := \frac{1}{h}\sum_{j=0}^{w-1} \|I^a_{t+(j+1)h} - I^a_{t+jh}\|^2
$$
follows a standard \(\chi^2\)-distribution with  \(wm\) degrees of freedom.

In the continuous-time limit as \(h\to 0\), the \(\chi^2\)-detector effectively tests the quadratic variation of \(I^a\), which is invariant under drift perturbations \((\rho,\tau)\); see \eqref{eqn:I^a_W}. In contrast, the log-likelihood~\eqref{eqn:log_lkhd} depends explicitly on the drift distortion induced by the attacks.
Although both detectors are innovation-based, our likelihood-based detector exploits the full path-space measure, whereas the \(\chi^2\)-detector relies only on a projection of the path law onto a low-dimensional summary statistic.
\end{rem}

\section{Optimal design of deterministic stealthy attacks}\label{sec:det}

The preceding section provides two quantitative criteria for attack design: the induced performance degradation $\mc{D}$ and the likelihood-based stealthiness measure $\mc{S}$. We now combine these criteria into an optimization problem for the attacker.

We first consider deterministic attacks, which are fixed before the system evolves and therefore represent an offline design problem. This setting captures the trade-off between effectiveness and detectability while leading to a tractable LQ formulation.
To this end, the admissible attacks \((\rho,\tau)\) are required to take values in
\begin{equation}
    \mathscr{A}^{\mathrm{det}} := \{(\rho,\tau):\rho\in C([0,T];\R^d),\ \tau\in C([0,T];\R^m)\}.
\end{equation}
Thus $\rho, \tau$ do not adapt to the realized system trajectories. Such situations may arise when the attacker has costly real-time access to the system.

Accordingly, the attacker considers the deterministic optimization problem:
\begin{equation}
    \label{eqn:det_obj}
    \inf_{(\rho,\tau)\in\mathscr{A}^{\mathrm{det}}} \mc{S}(\rho,\tau)- \lambda\mc{D}(\rho,\tau) + \frac{1}{2}\int_0^T \rho_t\transpose P_t\rho_t\ud t,
\end{equation}
where \(\lambda \geq 0\) and \(P\in C([0,T];\mathbb{S}_{++}^{d\times d})\).
The intensity \(\lambda\) controls the relative weight assigned to performance degradation, while \(P\) penalizes the magnitude of the state attack \(\rho\). 

This penalization term is motivated by earlier observation (cf. Remark~\ref{rem:detactability}) that state attacks are intrinsically more stealthy than observation attacks. It reflects the practical distinction: sensor spoofing is typically inexpensive but more easily detectable, whereas direct manipulation of the system dynamics is more costly but harder to detect.
In addition, this term plays a technical role in ensuring well-posedness of the optimization problem once its LQ structure is revealed (cf. Remark~\ref{rem:attk_cost}). Overall, the objective in~\eqref{eqn:det_obj} balances the effectiveness, detectability, and implementation cost of the attacks.

The following result provides a simplified expression for \(\mathcal{D}\) in~\eqref{eqn:det_obj}, revealing its underlying LQ structure.
\begin{prop}
    \label{prop:degrad_det}
    Let $\Psi_t := \mathrm{Concat}(X_t^c,\hat{X}^a_t)$, and define
    \begin{align} 
        &m_t := \E \Psi_t=: \mathrm{Concat}(m^c_t,m^a_t)\in\R^{2d},\\
        &\Sigma_t := \mathrm{cov}(\Psi_t) =: \begin{bmatrix}
            \Sigma^{cc}_t & \ast\\
            \ast & \Sigma^{aa}_t
        \end{bmatrix}\in\mathbb{S}^{2d\times 2d}_{+}.
    \end{align}
    If \((\rho,\tau)\in \mathscr{A}^{\mathrm{det}}\), then \(\Psi_t\) is Gaussian \(\forall t\in[0,T]\), and 
    \begin{multline}
        \label{eqn:D_integral}
        \mc{D}(\rho,\tau) = \int_0^T \mathrm{Tr}(Q_t  \Sigma^{cc}_t) +(m^c_t - r_t)\transpose Q_t (m^c_t - r_t) \\
        +  
        \mathrm{Tr}(F_tK_t F_t\Sigma^{aa}_t) + (F_tm^a_t + \tfrac12 \mb{f}_t)\transpose K_t  (F_tm^a_t + \tfrac12 \mb{f}_t)\ud t.
    \end{multline}
    The mean \(m\) and covariance \(\Sigma\) satisfy
    \begin{equation}
        \label{eqn:m_Sigma}
        \begin{aligned}
        \dot{m}_t &= \Big(\mathcal{A}_t m_t + \mathcal{B}_t\, \mathrm{Concat}(\rho_t, \tau_t) + \alpha_t\Big),\\ \dot{\Sigma}_t &= \Sigma_t \mathcal{A}_t\transpose + \mathcal{A}_t\Sigma_t + \mathcal{V}_t\mathcal{V}_t\transpose.
        \end{aligned}
    \end{equation}
    Here, the coefficients and initial conditions are given by
    \begin{equation}
        \label{eqn:aux_quant}
        \begin{aligned}
        &\mathcal{A}_t := \begin{bmatrix}
           A_t & -K_t F_t\\
           \Theta_t &A_t - K_t F_t - \Theta_t
        \end{bmatrix},\ \mathcal{V}_t := \begin{bmatrix}
           \sigma_V & 0\\
           0 & \mc{T}_t\sigma_W
        \end{bmatrix},\\
        &\mathcal{B}_t := \begin{bmatrix}
           I & 0\\
           0 & \mc{T}_t
        \end{bmatrix},\ \Sigma_0 = \begin{bmatrix}
            R_0 & 0\\
            0 & 0
        \end{bmatrix},\ m_0 = \mathrm{Concat}(x_0,x_0),\\
        &\alpha_t := \mathrm{Concat}(a_t - \tfrac12 K_t \mb{f}_t, a_t - \tfrac12 K_t \mb{f}_t),\\
        &\mc{T}_t := R_tH_t\transpose (\sigma_W\sigma_W\transpose)^{-1},\ \Theta_t := R_tH_t\transpose (\sigma_W\sigma_W\transpose)^{-1}H_t.
        \end{aligned}
    \end{equation}
\end{prop}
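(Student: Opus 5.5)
We derive a closed linear SDE for $\Psi_t=\mathrm{Concat}(X^c_t,\hat{X}^a_t)$ driven by the Brownian motions $V$ and $W$ with deterministic coefficients. Gaussianity and the moment ODEs then follow from standard linear SDE theory.

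\textbf{Step 1: the linear system for $\Psi$.} Substitute $u^a_t=-S_t^{-1}B_t\transpose F_t\hat{X}^a_t-\tfrac12 S_t^{-1}B_t\transpose\mb{f}_t$ from Proposition~\ref{prop:solution_agent} into \eqref{eqn:Xc}. Since $B_tu^a_t=-K_tF_t\hat{X}^a_t-\tfrac12K_t\mb{f}_t$, this gives
$$\ud X^c_t=(A_tX^c_t-K_tF_t\hat{X}^a_t+a_t-\tfrac12K_t\mb{f}_t+\rho_t)\ud t+\sigma_V\ud V_t.$$
In \eqref{eqn:Xa}, substitute $\ud Y^c_t$ from \eqref{eqn:Yc}. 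The innovation term becomes $\mc{T}_t[(H_tX^c_t-H_t\hat{X}^a_t+\tau_t)\ud t+\sigma_W\ud W_t]$, so
$$\ud\hat{X}^a_t=(\Theta_tX^c_t+(A_t-K_tF_t-\Theta_t)\hat{X}^a_t+a_t-\tfrac12K_t\mb{f}_t+\mc{T}_t\tau_t)\ud t+\mc{T}_t\sigma_W\ud W_t.$$
Stacking the two equations gives
$$\ud\Psi_t=(\mc{A}_t\Psi_t+\mc{B}_t\mathrm{Concat}(\rho_t,\tau_t)+\alpha_t)\ud t+\mc{V}_t\,\ud\,\mathrm{Concat}(V_t,W_t),$$
with initial condition $\Psi_0=(X^c_0,x_0)$. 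This initial condition is Gaussian with mean $m_0$ and covariance $\Sigma_0$ as stated.

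\textbf{Step 2: Gaussianity and moment equations.} For $(\rho,\tau)\in\mathscr{A}^{\mathrm{det}}$ all coefficients are deterministic and continuous; $R$, $F$, $\mb{f}$ are continuous by \eqref{eqn:R} and Proposition~\ref{prop:solution_agent}. Hence the SDE has a unique strong solution. It is given by the variation-of-constants formula using the fundamental matrix $\Phi$ of $\mc{A}$:
$$\Psi_t=\Phi(t,0)\Psi_0+\int_0^t\Phi(t,s)(\mc{B}_s\mathrm{Concat}(\rho_s,\tau_s)+\alpha_s)\ud s+\int_0^t\Phi(t,s)\mc{V}_s\ud\,\mathrm{Concat}(V_s,W_s).$$
The stochastic integral is Gaussian and independent of $X^c_0$, because $X^c_0$ is independent of $(V,W)$; we assume this, as is standard. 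Hence $\Psi_t$ is Gaussian. Taking expectations gives the ODE for $m$. The process $\Psi_t-m_t$ satisfies the homogeneous linear SDE, so Itô's product rule applied to $(\Psi-m)(\Psi-m)\transpose$, followed by expectation, yields the Lyapunov equation for $\Sigma$. The identity $\mc{V}\mc{V}\transpose$ for the diffusion term uses the independence of $V$ and $W$.

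\textbf{Step 3: the cost.} By Tonelli and the integrability of Gaussian second moments, $\mc{D}=\int_0^T\E[\cdot]\ud t$. Apply $\E[(Z-c)\transpose M(Z-c)]=\Tr(M\,\mathrm{cov}Z)+(\E Z-c)\transpose M(\E Z-c)$ to each term:
\begin{itemize}
\item For the first term, take $Z=X^c_t$, $M=Q_t$, $c=r_t$.
\item For the second term, write $(u^a_t)\transpose S_tu^a_t=(F_t\hat{X}^a_t+\tfrac12\mb{f}_t)\transpose B_tS_t^{-1}S_tS_t^{-1}B_t\transpose(F_t\hat{X}^a_t+\tfrac12\mb{f}_t)$, whose middle factor is $K_t$. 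Its expectation is $\Tr(F_tK_tF_t\Sigma^{aa}_t)+(F_tm^a_t+\tfrac12\mb{f}_t)\transpose K_t(F_tm^a_t+\tfrac12\mb{f}_t)$, using the symmetry of $F_t$.
\end{itemize}

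The main obstacle is Step 1. The signs in $\mc{A}$ and the appearance of $\mc{T}_t\tau_t$ in the $\hat X^a$ drift must come out exactly as in \eqref{eqn:aux_quant}. Once the closed linear system is written down, Steps 2 and 3 are routine.
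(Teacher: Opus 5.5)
Your proposal is correct and follows essentially the same route as the paper: you substitute $u^a$ and $\ud Y^c$ to obtain a closed linear SDE for $\Psi$, take expectations to get the ODE for $m$, apply It\^o's formula to $(\Psi_t-m_t)(\Psi_t-m_t)\transpose$ to get the ODE for $\Sigma$, and use the mean--covariance decomposition on each quadratic cost term. Your variation-of-constants argument and the explicitly flagged independence of $X^c_0$ from $(V,W)$ spell out the Gaussianity step, which the paper asserts in one line.
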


By Proposition~\ref{prop:degrad_det}, under deterministic attacks, \(\Sigma^{cc}\) and \(\Sigma^{aa}\) are both independent of $(\rho, \tau)$. Hence, \(\mc{D}\) reduces, up to attack-independent terms, to a time integral of a quadratic functional of \(m^c\) and \(m^a\). Moreover, since \(\Delta X\) is deterministic (cf. Proposition~\ref{prop:DeltaX}), tower property yields
\begin{equation}
    \label{eqn:DeltaX_det}
    \Delta X_t = \E (\Delta X_t) = \E\hat{X}^c_t - \E\hat{X}^a_t = m^c_t - m^a_t.
\end{equation}
Together with \eqref{eqn:stealthy_detector}, this shows that the stealthiness term \(\mc{S}\) also admits a quadratic structure.

Therefore, the deterministic attack design problem~\eqref{eqn:det_obj} reduces to a deterministic LQ optimal control problem, with state \(m\) evolving according to the linear dynamics~\eqref{eqn:m_Sigma} and \((\rho,\tau)\) as open-loop controls.  Theorem~\ref{thm:solution_det} provides a semi-explicit solution for the optimal attack tuple \((\rho^*,\tau^*)\) via Pontryagin's maximum principle \cite{mangasarian1966sufficient,seierstad1977sufficient},

\begin{thm}
    \label{thm:solution_det}
    For the optimization problem~\eqref{eqn:det_obj}, the optimal deterministic attacks \((\rho^*,\tau^*)\in\mathscr{A}^{\mathrm{det}}\) are given by
    \begin{align}
    \label{eqn:opt_det_rho}
    \rho^*_t &= -P_t^{-1}(F^c_tm^c_t + F^a_tm^a_t + \mb{f}^\rho_t),\\
    \label{eqn:opt_det_tau}
    \tau^*_t &= -(H_tR_tG^c_t + H_t)m^c_t - (H_tR_tG^a_t - H_t)m^a_t \\
    &\qquad- H_tR_t\mb{g}^\tau_t,
    \end{align}
    where the matrix-valued functions \(F^c,F^a,G^c,G^a\in C([0,T];\mathbb{R}^{d\times d})\) and the vector-valued functions \(\mb{f}^\rho,\mb{g}^\tau\in C([0,T];\mathbb{R}^{d})\) satisfy
    \begin{equation}
    \label{eqn:ODE_det}
    \begin{aligned}
        &\dot{F}^c_t + F^c_tA_t + A_t\transpose F^c_t - F^c_tP_t^{-1}F^c_t - F^a_t\Lambda_tG^c_t - 2\lambda Q_t = 0,\\
        &\dot{F}^a_t + F^a_t(A_t-K_tF_t) + A_t\transpose F^a_t - F^c_tK_t F_t - F^c_tP_t^{-1}F^a_t  \\
        &\quad - F^a_t\Lambda_tG^a_t= 0,\\
        &\dot{G}^c_t + G^c_tA_t  + (A_t\transpose - F_tK_t)G^c_t - F_tK_tF^c_t - G^a_t \Lambda_t G^c_t\\
        &\quad - G^c_tP_t^{-1}F^c_t = 0,\\
        &\dot{G}^a_t + G^a_t(A_t-K_tF_t) + (A_t\transpose-F_tK_t) G^a_t - F_tK_tF^a_t \\
        &\quad - G^c_tK_t F_t  - G^c_t P_t^{-1}F^a_t  - G^a_t \Lambda_t G^a_t - 2\lambda F_tK_t F_t= 0,\\
        &\dot{\mb{f}}^\rho_t + (A_t\transpose- F^c_tP_t^{-1})\mb{f}^\rho_t + F^c_t(a_t - \tfrac12 K_t \mb{f}_t) - F^a_t\Lambda_t\mb{g}^\tau_t\\
        &\quad + F^a_t(a_t - \tfrac12 K_t \mb{f}_t)   + 2\lambda Q_tr_t = 0,\\
        &\dot{\mb{g}}^\tau_t+ (A_t\transpose-F_tK_t)\mb{g}^\tau_t- G^c_tP_t^{-1}\mb{f}^\rho_t - G^a_t\Lambda_t\mb{g}^\tau_t - F_tK_t \mb{f}^\rho_t \\
        &\quad + G^c_t(a_t - \tfrac12 K_t \mb{f}_t) + G^a_t(a_t - \tfrac12 K_t \mb{f}_t) - \lambda F_tK_t \mb{f}_t = 0,
    \end{aligned}
\end{equation}
    with terminal conditions \(F^c_T = F^a_T = G^c_T = G^a_T = 0\) and \(\mb{f}^\rho_T =\mb{g}^\tau_T = 0\), where \(\Lambda_t := R_tH_t\transpose (\sigma_W\sigma_W\transpose)^{-1}H_tR_t\).

    Moreover, the ODE system~\eqref{eqn:ODE_det} admits a unique solution on \([0,T]\), provided that
    \begin{equation}
        \label{eqn:exist_interval}
        T < \frac{\pi/2}{\sqrt{[2\lambda(\|Q\|\vee \|FKF\|)+ b_{\mc{P}}](\|P^{-1}\|\vee \|\Lambda\| + b_{\mc{P}})}},
    \end{equation}
    where \( b_{\mc{P}} := \|A\|\vee \|A-KF\| + \|KF\|\).
    In particular, when \(\lambda = 0\), the ODE system~\eqref{eqn:ODE_det} admits a unique solution on \([0,T]\) for any \(T>0\).
\end{thm}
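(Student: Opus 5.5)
Using Proposition~\ref{prop:degrad_det}, \eqref{eqn:DeltaX_det} and Proposition~\ref{prop:stealthy}, I will first rewrite \eqref{eqn:det_obj}, up to attack-independent constants, as a deterministic LQ problem in the state $m=\mathrm{Concat}(m^c,m^a)$ with open-loop controls $(\rho,\tau)$. The running cost is
\begin{multline}
\tfrac12 (H_t(m^c_t-m^a_t)+\tau_t)\transpose(\sigma_W\sigma_W\transpose)^{-1}(H_t(m^c_t-m^a_t)+\tau_t) + \tfrac12\rho_t\transpose P_t\rho_t\\
-\lambda(m^c_t-r_t)\transpose Q_t(m^c_t-r_t) -\lambda (F_tm^a_t+\tfrac12\mb{f}_t)\transpose K_t(F_tm^a_t+\tfrac12\mb{f}_t).
\end{multline}
The dynamics are $\dot m=\mathcal{A}m+\mathcal{B}\,\mathrm{Concat}(\rho,\tau)+\alpha$. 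I will then write the Hamiltonian with adjoint $p=\mathrm{Concat}(p^c,p^a)$. The cost is strictly convex in $\rho$ through $P$ and in $\tau$ through $(\sigma_W\sigma_W\transpose)^{-1}$. Pointwise minimization gives $\rho=-P^{-1}p^c$. The $\tau$-stationarity condition $(\sigma_W\sigma_W\transpose)^{-1}(H\Delta m+\tau)+\mc{T}\transpose p^a=0$ gives $\tau=-H(m^c-m^a)-HRp^a$, which already has the shape of \eqref{eqn:opt_det_tau}. Substituting these back, the term $\mc{T}\tau$ in the state equation produces $-\Theta\Delta m-\Lambda p^a$. This is why $\Lambda$ appears, and it cancels the $\Theta$ entries of $\mathcal{A}$. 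The adjoint equations $\dot p=-\partial_m\mathcal{H}$ then also simplify, because the $\tau$-minimized stealth term leaves only $\tfrac12 (p^a)\transpose\Lambda p^a$.

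Next I will make the affine ansatz $p^c=F^cm^c+F^am^a+\mb{f}^\rho$ and $p^a=G^cm^c+G^am^a+\mb{g}^\tau$ with zero terminal values, matching the transversality condition $p_T=0$. Differentiating, substituting the forward and backward equations, and matching the coefficients of $m^c$, $m^a$ and the constant term yields \eqref{eqn:ODE_det}. The $-2\lambda Q$ and $-2\lambda FKF$ terms come from the degradation part of the cost, and the $\mb{f}$ and $r$ terms from $\alpha$ and the tracking part. The optimal controls then take exactly the forms \eqref{eqn:opt_det_rho}--\eqref{eqn:opt_det_tau}.

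Optimality must be checked separately, because the cost is concave in $m$ when $\lambda>0$, so the Mangasarian sufficiency condition fails. I will instead verify optimality by a completion-of-squares argument. Given a solution of \eqref{eqn:ODE_det}, I will differentiate $\tfrac12 m\transpose\mathcal{P}m+\mathbf{q}\transpose m$ along arbitrary admissible trajectories, where $\mathcal{P}=\begin{bmatrix}F^c&F^a\\G^c&G^a\end{bmatrix}$. This writes the cost as a constant plus nonnegative quadratic terms in $\rho-\rho^*$ and $\tau-\tau^*$. One subtlety is that $\mathcal{P}$ as written need not be symmetric. I will first argue that the unique solution yields a symmetric block structure, i.e.\ $F^a=(G^c)\transpose$ with $F^c,G^a$ symmetric, since the transposed system solves the same ODE. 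Uniqueness of the optimizer also follows from this representation.

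The main obstacle is well-posedness of this coupled, generally indefinite matrix Riccati system, written as $\dot{\mathcal{P}}+\mathcal{P}\mathcal{A}'+\mathcal{A}'^\transpose\mathcal{P}-\mathcal{P}\mathcal{M}\mathcal{P}-\mathcal{Q}=0$ with $\mathcal{M}=\mathrm{diag}(P^{-1},\Lambda)$, $\mathcal{Q}=2\lambda\,\mathrm{diag}(Q,FKF)$, and a drift $\mathcal{A}'$ built from $A$, $A-KF$ and $KF$. The vector equations are linear once $\mathcal{P}$ exists, so only $\mathcal{P}$ matters. Local existence is Picard. For global existence on $[0,T]$ I will use a comparison and blow-up estimate on $y(s)=\|\mathcal{P}_{T-s}\|$, which gives $\dot y\le c_1+2b y+c_2y^2$. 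I will bound this by the scalar comparison $\dot z=(c_1+b)+(c_2+b)z^2$, using $2by\le b+by^2$. That comparison has solution $z=\sqrt{(c_1+b)/(c_2+b)}\tan(\sqrt{(c_1+b)(c_2+b)}\,s)$, finite whenever $\sqrt{\cdot}\,T<\pi/2$. This is exactly \eqref{eqn:exist_interval}, with $c_1=2\lambda(\|Q\|\vee\|FKF\|)$, $c_2=\|P^{-1}\|\vee\|\Lambda\|$ and $b=b_{\mc{P}}$. For $\lambda=0$ we have $\mathcal{Q}=0$, so $\mathcal{P}\equiv0$ is a solution of the homogeneous Riccati equation with zero terminal data; it is unique by the local Lipschitz property, and hence global for any $T>0$.
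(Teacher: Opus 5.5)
Your proposal is correct. The derivation of the candidate coincides with the paper's: pointwise minimization of the Hamiltonian, the cancellation of the $\Theta_t$ entries producing $-\Lambda_t p^a$, the affine ansatz with zero terminal data, and coefficient matching. The differences are in the two supporting steps.

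\textbf{Optimality.} The paper only invokes Pontryagin's principle together with the sufficiency theorems \cite{mangasarian1966sufficient,seierstad1977sufficient}. You correctly note that for $\lambda>0$ these convexity-based conditions fail: the running cost, and even the $\tau$-minimized Hamiltonian (linear in $m$ apart from the $-\lambda$ terms), is concave in $m$. Your completion-of-squares verification uses $V(t,m)=\tfrac12 m\transpose M_t m+\mathbf{q}_t\transpose m+c_t$ with $M=\begin{bmatrix}F^c&F^a\\G^c&G^a\end{bmatrix}$. Here $M$ is symmetric by your transposition/uniqueness argument, since $\mathrm{diag}(P_t^{-1},\Lambda_t)$ and $\mathrm{diag}(Q_t,F_tK_tF_t)$ are symmetric. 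The argument needs only that the control weight $\mathrm{diag}(P_t,(\sigma_W\sigma_W\transpose)^{-1})$ is positive definite and that the Riccati solution exists. It therefore proves optimality and uniqueness of $(\rho^*,\tau^*)$ whenever \eqref{eqn:ODE_det} is solvable, which the paper's citation does not cover.

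\textbf{Well-posedness.} The paper applies the matrix comparison theorem twice, which relies on symmetry of $M$. It gets a lower bound from a Riccati equation without linear terms via the block Gershgorin lemma, and an upper bound from the linear ODE for $N$. You instead bound $\|M_{T-s}\|$ directly by a scalar Riccati comparison. Your inequality $2by\le b+by^2$ is exactly the scalar counterpart of the paper's Gershgorin step, so you recover the identical interval \eqref{eqn:exist_interval} with less machinery. The paper's route additionally shows that $M$ stays bounded above, so any blow-up can only be toward $-\infty$. Your $\lambda=0$ argument ($M\equiv 0$ because the constant term vanishes and $M_T=0$) is also more direct than the paper's appeal to standard Riccati theory.

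One harmless slip: the $\tau$-minimized stealth-plus-coupling term is $-\tfrac12 (p^a)\transpose\Lambda_t p^a$, not $+\tfrac12 (p^a)\transpose\Lambda_t p^a$. It does not depend on $m$, so the adjoint equations are unaffected.
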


Theorem~\ref{thm:solution_det} establishes the local solvability of the Riccati-type ODE system~\eqref{eqn:ODE_det}, following an argument similar to that in~\cite{hu2025strategic}. The existence interval depends on the trade-off parameter $\lambda$: placing greater weight on performance degradation may shorten the existence interval of the solution to the ODE system.

\begin{rem}
    \label{rem:attk_cost}
    From a technical perspective, the problem~\eqref{eqn:det_obj} becomes ill-posed without the state-attack cost (\(P \equiv 0\)). 
    In that case, the objective~\eqref{eqn:det_obj} depends on  
    \(\rho\) solely through the mean states \(m^c\) and \(m^a\) (cf. equations~\eqref{eqn:m_Sigma}--\eqref{eqn:DeltaX_det}), and thus 
    lacks direct regularization on \(\rho\). Consequently, the Hamiltonian does not admit a minimizer. Introducing the quadratic term \(\rho_t\transpose P_t \rho_t\) restores well-posedness by regularizing the state attack \(\rho\), in addition to its modeling motivation.
\end{rem}

\section{Optimal design of adaptive stealthy attacks}\label{sec:adpt}
Complementary to the deterministic setting in Section~\ref{sec:det}, we study the optimal design of stealthy attacks when the attacker can adapt to the realized system evolution. This leads to a stochastic control problem under partial observation with an endogenous information structure.

In contrast to existing work, where attacks are typically open-loop or constrained by exogenous detection statistics, the attacker here operates in feedback form while explicitly accounting for detectability through the innovation. This combination introduces significant technical challenges, as the attacks affect both the system dynamics and the information available for future decisions. 
We develop a \emph{hierarchical optimization framework}, combined with the separation principle, to obtain semi-explicit characterizations of optimal adaptive attacks.

In this setting, the attack tuple \((\rho,\tau)\) is restricted to the following admissible class:
\begin{multline}
    \label{eqn:adm_adpt}
    \mathscr{A}^{\mathrm{adap}} := \Big\{(\rho,\tau):\rho_t \in \F^{Y^c}_t,\ \forall t\in[0,T],\\
    \E \int_0^T \|\rho_t\|^2\ud t<\infty,\ \tau\in C([0,T];\R^m)\Big\},
\end{multline}
which constitutes a further restriction of condition~\eqref{eqn:cond_rho_tau}.
The state attack \(\rho\) is constructed adaptively based on the progressively revealed observation \(Y^c\), whereas the observation attack \(\tau\) is restricted to be deterministic. 

This asymmetric structure is both natural and essential.
The state attack \(\rho\) acts as an external input to the system dynamics and naturally admits feedback adaptation based on observed outputs. 
In contrast, the observation attack \(\tau\) directly alters the measurement process from which both the agent and the attacker extract information. 
Allowing \(\tau\) to depend on the corrupted observations $Y^c$ would thus create a self-referential information structure, where the attacker designs the observation distortion $\tau$ based on data that is itself affected by this distortion.
For this reason, \(\tau\) is more naturally modeled as a pre-designed signal, corresponding to a fixed spoofing or biasing mechanism implemented at the sensor or communication layer. From a technical perspective, such dependence would result in a control-dependent innovation process and invalidate the separation principle. Restricting \(\tau\) to be deterministic avoids this issue while still capturing realistic spoofing or biasing mechanisms; see Remark~\ref{rem:tau_det} for detailed discussion.

Under this admissible class $\mathscr{A}^{\mathrm{adap}}$, the attacker solves 
\begin{equation}
    \label{eqn:adap_obj}
    \inf_{(\rho,\tau)\in\mathscr{A}^{\mathrm{adap}}} \mc{S}(\rho,\tau)- \lambda\mc{D}(\rho,\tau) + \frac{1}{2}\E\int_0^T \rho_t\transpose P_t\rho_t\ud t,
\end{equation}
where \(\mc{D}\) and \(\mc{S}\) are given by equations~\eqref{eqn:def_D} and~\eqref{eqn:stealthy_detector}.
This formulation~\eqref{eqn:adap_obj} is consistent with the deterministic case~\eqref{eqn:det_obj} and retains the same interpretation.

\subsection{Solution strategy: hierarchical optimization}
A direct solution of the adaptive problem is challenging due to the coupling between control, state dynamics, and information structure. To address this difficulty, we develop a hierarchical optimization framework.

\textbf{Step (a):} we first fix $\tau$ and characterize the optimal adaptive state attack \(\rho^*(\tau)\). In this step, note that the discrepancy process \(\Delta X\) is \(\{\F^{Y^c}_t\}\)-adapted whenever \((\rho,\tau)\in \mathscr{A}^{\mathrm{adap}}\) (cf. Proposition~\ref{prop:DeltaX}). This allows us to interpret the optimization over $\rho$, for fixed $\tau$, as a partially observed control problem. In this formulation, \(X^c\) plays the role of a latent state, while $Y^c$, the filtered state $\hat{X}^a$, and the discrepancy \(\Delta X\) serve as observable quantities. When the separation principle \cite{davis1977} applies, the optimal $\rho^\ast(\tau)$ admits a feedback representation in terms of \(\hat{X}^c,Y^c,\hat{X}^a,\Delta X\), and problem~\eqref{eqn:adap_obj} reduces further to a fully observed Markovian control problem. The following theorem provides the corresponding characterization.

\begin{thm}
    \label{thm:opt_adap_rho}
    Fix $\tau \in C([0,T];\R^m)$. The optimal state attack $\rho^*(\tau)$ at time \(t\), given the filtered states $\hat{X}^c_t = x^c$, $\hat{X}^a_t = x^a$, and $\Delta X_t = \Delta x$, admits a linear structure
    \begin{equation}
        \label{eqn:opt_adp_rho}
    \rho^*(t,x^c,x^a,\Delta x;\tau) = -P_t^{-1}(e_{x^c} + e_{\Delta x})\transpose(F^\phi_t\phi + \mb{f}^\phi_t),
    \end{equation}
    where \(\phi := \mathrm{Concat}(x^c,x^a,\Delta x) \in \R^{3d}\), and \(F^\phi\in C([0,T];\mathbb{S}^{3d\times 3d})\), \(\mb{f}^\phi\in C([0,T];\mathbb{R}^{3d})\) satisfy a coupled system of Riccati-type differential equations
    \begin{equation}
    \label{eqn:ODE_adap_rho}
    \begin{aligned}
        &\dot{F}^\phi_t - F^\phi_tO_tF^\phi_t + F^\phi_tD^\phi_t + (D^\phi_t)\transpose F^\phi_t + 2Q^\phi_t = 0,\\
        &\dot{\mb{f}}^\phi_t - F^\phi_tO_t\mb{f}^\phi_t + F^\phi_t(d^\phi_t + d^\tau_t) + (D^\phi_t)\transpose\mb{f}^\phi_t + \ell^\phi_t + \ell^\tau_t = 0,
    \end{aligned}
    \end{equation}
    with terminal conditions \(F^\phi_T = 0\) and \(\mb{f}^\phi_T = 0\).
    Here, the \(\tau\)-independent coefficients are given by
    \begin{equation} 
    \begin{aligned}
    &D^\phi_t := \begin{bmatrix}
        A_t & -K_tF_t & 0\\
        \Theta_t & A_t - K_tF_t - \Theta_t & 0\\
        0 & 0 & A_t - \Theta_t
    \end{bmatrix},\\
    &d^\phi_t := \mathrm{Concat}(a_t - \tfrac12 K_t \mb{f}_t,a_t - \tfrac12 K_t \mb{f}_t,0),\\
    &e_{x^c}\transpose := [ I_d \; 0 \; 0], \ e_{x^a}\transpose := [ 0 \; I_d \; 0] ,\ e_{\Delta x}\transpose := [ 0 \; 0 \; I_d]\in\R^{d\times 3d},\\
    & O_t := (e_{x^c} + e_{\Delta x})P_t^{-1}(e_{x^c} + e_{\Delta x})\transpose\in\mathbb{S}^{3d\times 3d}, \\
    &\ell^\phi_t := 2\lambda e_{x^c}Q_tr_t -\lambda e_{x^a}F_tK_t\mb{f}_t\in \R^{3d},\\
    & Q^\phi_t := \tfrac12e_{\Delta x}H_t\transpose (\sigma_W\sigma_W\transpose)^{-1}H_te_{\Delta x}\transpose -\lambda e_{x^a}F_tK_tF_te_{x^a}\transpose \\
    &\qquad\qquad - \lambda e_{x^c}Q_te_{x^c}\transpose\in\mathbb{S}^{3d\times 3d}.
    \end{aligned}
    \end{equation}
    The \(\tau\)-dependent coefficients are defined as
    \begin{equation}
    \label{eqn:def_d_ell}
    \begin{aligned}
    &d^\tau_t := \mathrm{Concat}(0,\mc{T}_t\tau_t,-\mc{T}_t\tau_t)\in\R^{3d},\\
    &\ell^\tau_t := e_{\Delta x}H_t\transpose(\sigma_W\sigma_W\transpose)^{-1}\tau_t\in \R^{3d}.
    \end{aligned}
    \end{equation} 
    Moreover, the ODE system~\eqref{eqn:ODE_adap_rho} admits a unique solution on \([0,T]\) for small enough \(T\).
\end{thm}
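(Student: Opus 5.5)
Fix $\tau$ deterministic. We reduce the problem to a fully observed LQ problem in the state $\phi_t=\mathrm{Concat}(\hat X^c_t,\hat X^a_t,\Delta X_t)$ driven by an attack-independent Brownian motion, then solve it by completion of squares.

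\textbf{Step 1: dynamics of $\phi$.} Define the attack-aware innovation
\[
\ud I^c_t := (\sigma_W\sigma_W\transpose)^{-\frac12}\big[\ud Y^c_t-(H_t\hat X^c_t+h_t+\tau_t)\ud t\big].
\]
Since $\tau$ is deterministic and $\rho$ is $\{\F^{Y^c}_t\}$-adapted, the separation principle \cite[Proposition~5.3.2]{davis1977} together with the Kalman--Bucy filter shows that $I^c$ is an $\{\F^{Y^c}_t\}$-Brownian motion. Moreover, by Remark~\ref{rem:R}, its law does not depend on $\rho$. Rewrite \eqref{eqn:Xa} using
\[
\ud Y^c_t-(H_t\hat X^a_t+h_t)\ud t=(H_t\Delta X_t+\tau_t)\ud t+(\sigma_W\sigma_W\transpose)^{\frac12}\ud I^c_t .
\]
Then \eqref{eqn:X_hat_c}, \eqref{eqn:Xa} and \eqref{eqn:Delta_X} give
\[
\ud\phi_t=\big(D^\phi_t\phi_t+(e_{x^c}+e_{\Delta x})\rho_t+d^\phi_t+d^\tau_t\big)\ud t+\Xi_t\ud I^c_t ,
\]
where $\Xi_t=\mathrm{Concat}(\mc T_t,\mc T_t,0)(\sigma_W\sigma_W\transpose)^{\frac12}$ and $u^a$ has been substituted from \eqref{eqn:opt_ctrl}. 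The main point to check here is that the $\Theta\Delta X$ and $\mc T\tau$ terms reproduce exactly $D^\phi$ and $d^\tau$; in particular, the $\hat X^a$ row picks up $\Theta_t(x^c-x^a)$.

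\textbf{Step 2: rewrite the cost in $\phi$.} By Proposition~\ref{prop:stealthy}, the stealthiness term is $\frac12\E\int(H\Delta X+\tau)\transpose(\sigma_W\sigma_W\transpose)^{-1}(H\Delta X+\tau)\ud t$. This is quadratic in $e_{\Delta x}\transpose\phi$, contributing the first block of $Q^\phi$ and the linear term $\ell^\tau$. For $\mc D$, condition on $\F^{Y^c}_t$. The conditional covariance gives $\E[(X^c-r)\transpose Q(X^c-r)]=\E[(\hat X^c-r)\transpose Q(\hat X^c-r)]+\Tr(QR_t)$, and $u^a$ is affine in $\hat X^a$. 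Hence $-\lambda\mc D$ equals, up to $\rho$-independent constants, the remaining blocks of $Q^\phi$ together with the linear term $\ell^\phi$. The total cost is therefore
\[
\E\int_0^T\Big[\phi\transpose Q^\phi\phi+\phi\transpose(\ell^\phi+\ell^\tau)+\tfrac12\rho\transpose P\rho\Big]\ud t+\text{const}.
\]

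\textbf{Step 3: verification.} Use the ansatz $V(t,\phi)=\frac12\phi\transpose F^\phi_t\phi+\phi\transpose\mb f^\phi_t+c_t$ and apply It\^o's formula along any admissible $\rho$. Completing the square in $\rho$ yields the minimizer
\[
\rho=-P^{-1}(e_{x^c}+e_{\Delta x})\transpose(F^\phi\phi+\mb f^\phi).
\]
Matching the quadratic and linear coefficients gives \eqref{eqn:ODE_adap_rho}, and $c$ absorbs the trace term from $\Xi$. Since the remainder is a nonnegative square weighted by $P$, optimality holds among all of $\mathscr A^{\mathrm{adap}}$ with fixed $\tau$, because the innovation $I^c$ does not depend on $\rho$. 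The feedback is linear, so the closed-loop SDE is well-posed and square integrable.

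\textbf{Step 4: solvability.} $Q^\phi$ is indefinite because of the $-\lambda$ terms, so the standard global Riccati theory does not apply. Local Lipschitz continuity of the quadratic right-hand side gives a unique solution on a short interval. One can quantify this as in Theorem~\ref{thm:solution_det} by comparing $\|F^\phi\|$ with the scalar Riccati equation $\dot y=a y^2+2by+c$, which yields an explicit $T$ of $\tan$-type. The $\mb f^\phi$ equation is linear given $F^\phi$, so it is solvable on the same interval. Symmetry of $F^\phi$ follows from uniqueness, since $(F^\phi)\transpose$ solves the same equation.

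The main obstacle is Step 1: justifying that $I^c$ is a $\rho$-independent Brownian motion, even though $\rho$ feeds back through $Y^c$. This is where determinism of $\tau$ and the separation principle for $\F^{Y^c}$-adapted inputs are essential. I would cite \cite[Section~5.3]{davis1977}, applying it to the admissible class via its feature representation.
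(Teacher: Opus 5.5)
Your proposal is correct and follows essentially the same route as the paper. The separation principle reduces the problem to a Markovian LQ problem in $\phi=\mathrm{Concat}(\hat{X}^c,\hat{X}^a,\Delta X)$ driven by the innovation $I^c$; the cost is rewritten using the conditional covariance and Proposition~\ref{prop:stealthy}; and a quadratic value-function ansatz with pointwise minimization in $\rho$ yields \eqref{eqn:ODE_adap_rho}, with short-horizon solvability by an argument analogous to Theorem~\ref{thm:solution_det}. Your It\^{o}/completion-of-squares verification is a more rigorous version of the paper's formal HJB step.

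One correction to Step 3: optimality should be claimed only over the feature-type class of Remark~\ref{rem:rho_star}, not over all of $\mathscr{A}^{\mathrm{adap}}$, as your closing paragraph already concedes. That restriction is what makes the filter exact and $I^c$ a control-independent $\{\F^{Y^c}_t\}$-Brownian motion.
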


This result shows that, despite partial observation and endogenous information, the optimal adaptive attack retains a tractable structure. The feedback form reflects how the attacker dynamically exploits the mismatch between the true and perceived system states.

As in the deterministic case, the existence interval depends on the parameter \(\lambda\). A larger value of $\lambda$, corresponding to a stronger emphasis on performance degradation, may shorten the existence interval of the solution to system~\eqref{eqn:ODE_adap_rho}.

\begin{rem}
    \label{rem:tau_det}
    A key technical difficulty in establishing Theorem~\ref{thm:opt_adap_rho} arises from the \(\tau\)-dependence of the innovation Brownian motion associated with equations~\eqref{eqn:Xc}--\eqref{eqn:Yc}.
    If \(\rho\) and \(\tau\) are optimized simultaneously, the driving innovation Brownian motion would become controlled, violating the conditions required for the separation principle.
    Restricting \(\tau\) to be deterministic avoids this issue, and enables the hierarchical optimization framework, where \(\rho\) is only optimized under fixed \(\tau\). 
    
    In general, allowing \(\tau\) to be fully adaptive would lead to a substantially more complex control problem, potentially involving measure-valued controls \cite{fuhrman2026optimal}, which is beyond the scope of this paper and is left for future studies.
\end{rem}

\begin{rem}
    \label{rem:rho_star}
    The optimality of \(\rho^*(\tau)\) holds, for any fixed \(\tau\), within a restricted class of admissible controls that depend on the histories of \(Y^c,\hat{X}^c,\hat{X}^a\) and \(\Delta X\). This class can be explicitly constructed analogously to equation~\eqref{eqn:adm_separation} and \cite[Definition~5.3.1]{davis1977}, and is omitted for brevity.  
\end{rem}

\textbf{Step (b):} we determine the optimal observation attack \(\tau^*\) induced by \(\rho^*(\tau)\).
Substituting $\rho^\ast(\tau)$ into the objective~\eqref{eqn:adap_obj} reduces the problem to a deterministic LQ control problem in $\tau$.

\begin{thm}
    \label{thm:opt_adap_tau}
    The optimal observation attack \(\tau^*\) is
    \begin{equation}
        \label{eqn:opt_adap_tau}
        \tau^*_t =  - (Q^F_{t})\transpose (F^\tau_{t} \mb{f}^\phi_{t} + \mb{f}^\tau_{t} + \Phi_0) - (\sigma_W\sigma_W\transpose)G_{t}\transpose \mb{f}^\phi_{t},
    \end{equation}
    where \(F^\tau\in C([0,T]; \mathbb{S}^{3d \times 3d})\) and \(\mb{f}^\tau\in C([0,T]; \R^{3d})\) solve the system
    \begin{equation}
    \label{eqn:ODE_U}
    \begin{aligned}
        &\dot{F}^\tau_t+ F^\tau_t Q^F_{t}(\sigma_W\sigma_W\transpose)^{-1}(Q^F_{t})\transpose F^\tau_t + (O_{t} + G_{t}\sigma_W\sigma_W\transpose G_{t}\transpose) \\
        & \quad - F^\tau_t [(D^\phi_{t})\transpose - F^\phi_{t}O_{t} - Q^F_{t}G_{t}\transpose] \\
        &\quad - [(D^\phi_{t})\transpose - F^\phi_{t}O_{t} - Q^F_{t}G_{t}\transpose]\transpose F^\tau_t = 0,\\
        &\dot{\mb{f}}^\tau_t - F^\tau_t[F^\phi_{t}d^\phi_{t} + \ell^\phi_{t} - Q^F_{t}(\sigma_W\sigma_W\transpose)^{-1}(Q^F_{t})\transpose(\mb{f}^\tau_t + \Phi_0)] \\
        &\quad\quad + [G_{t}(Q^F_{t})\transpose - D^\phi_{t} + O_{t}\transpose (F^\phi_{t})\transpose](\mb{f}^\tau_t + \Phi_0) - d^\phi_{t} = 0,
    \end{aligned}
    \end{equation}
    with initial conditions \(F^\tau_0 = 0\) and \(\mb{f}^\tau_0 = 0\).
    The coefficients are defined as
    \begin{equation}
        \begin{aligned}
        &\Phi_0 := \mathrm{Concat}(\hat{X}^c_0,\hat{X}^a_0,\Delta X_0) \in\R^{3d},\\
        &G_t := \mathrm{Concat}(0_{d\times m},\mc{T}_t,-\mc{T}_t)\in \R^{3d\times m},\\
        &Q^F_{t} := F^\phi_{t}G_{t}\sigma_W\sigma_W\transpose + e_{\Delta x}H_{t}\transpose\in \R^{3d\times m}.
        \end{aligned}
    \end{equation}
    Moreover, the ODE system~\eqref{eqn:ODE_U} admits a unique solution on \([0,T]\) for small enough \(T\).
\end{thm}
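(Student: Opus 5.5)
Our approach is to substitute the feedback $\rho^*(\tau)$ from Theorem~\ref{thm:opt_adap_rho} into the objective~\eqref{eqn:adap_obj}. This yields a value functional that depends on $\tau$ only through deterministic quantities, and we then solve the resulting deterministic LQ problem in $\tau$ by Pontryagin's maximum principle. Concretely, let $\Phi_t := \mathrm{Concat}(\hat{X}^c_t,\hat{X}^a_t,\Delta X_t)$. Under $\rho^*(\tau)$ and the attack-aware innovation $\ud \tilde I_t = (\sigma_W\sigma_W\transpose)^{-1/2}(\ud Y^c_t - (H_t\hat X^c_t + h_t+\tau_t)\ud t)$, which is a Brownian motion independent of the controls, the process $\Phi$ satisfies the linear SDE
\[
\ud\Phi_t = \big[(D^\phi_t - O_tF^\phi_t)\Phi_t - O_t\mb{f}^\phi_t + d^\phi_t + d^\tau_t\big]\ud t + G_t(\sigma_W\sigma_W\transpose)^{1/2}\ud \tilde I_t .
\]
Here the $\Delta X$ component carries no noise, and the noise terms for $\hat X^a$ come from $\ud Y^c$.

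\textbf{Step 1: compute the optimal cost as a function of $\tau$.} Apply It\^o's formula to $\Phi_t\transpose F^\phi_t\Phi_t + 2(\mb{f}^\phi_t)\transpose\Phi_t$, using the Riccati system~\eqref{eqn:ODE_adap_rho}. This is the standard verification identity, and it gives the cost as $J(\tau) = \Phi_0\transpose F^\phi_0\Phi_0/2 + (\mb{f}^\phi_0)\transpose\Phi_0 + \int_0^T(\text{quadratic in }\mb{f}^\phi_t,\tau_t)\ud t$ plus $\tau$-independent terms. Since $\mb{f}^\phi$ is affine in $\tau$ through $d^\tau,\ell^\tau$ and satisfies a backward linear ODE driven by $\tau$, this is a deterministic LQ problem. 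In it, $\mb{f}^\phi$ plays the role of the state, with a terminal constraint $\mb{f}^\phi_T=0$, and $\tau$ is the control. Note that $d^\tau_t = G_t\tau_t$ and $\ell^\tau_t = e_{\Delta x}H_t\transpose(\sigma_W\sigma_W\transpose)^{-1}\tau_t$. Collecting the coupling of $\tau$ with $\mb{f}^\phi$ then produces exactly the matrix $Q^F_t = F^\phi_tG_t\sigma_W\sigma_W\transpose + e_{\Delta x}H_t\transpose$, together with a quadratic penalty $\tau\transpose(\sigma_W\sigma_W\transpose)^{-1}\tau$ coming from $\mc{S}$. The running cost in the state also contains $(\mb{f}^\phi)\transpose(O_t + G_t\sigma_W\sigma_W\transpose G_t\transpose)\mb{f}^\phi$, coming from $\rho^*$ and the noise term.

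\textbf{Step 2: Pontryagin.} The state $\mb{f}^\phi$ evolves backward from time $T$. It is therefore convenient to treat it as a forward problem in reversed time, or equivalently to introduce a costate $p$ with initial condition. The boundary term $(\mb{f}^\phi_0)\transpose\Phi_0$ explains why $\Phi_0$ appears, and why the costate satisfies $p_0 = \Phi_0$ up to the affine decoupling. Minimizing the Hamiltonian in $\tau$ is possible because the $\tau$-quadratic coefficient is positive definite. This gives $\tau_t$ as an affine function of $p_t$ and $\mb{f}^\phi_t$. We then decouple with the ansatz $p_t = F^\tau_t\mb{f}^\phi_t + \mb{f}^\tau_t + \Phi_0$, $F^\tau_0=0$, $\mb{f}^\tau_0=0$. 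Matching coefficients yields~\eqref{eqn:ODE_U}, and the Hamiltonian minimizer becomes~\eqref{eqn:opt_adap_tau}. Sufficiency follows from Mangasarian-type conditions, which require convexity of the reduced problem in $(\mb{f}^\phi,\tau)$. We will verify this convexity directly when $F^\tau$ exists, by a completing-the-square argument along the Riccati solution rather than via global convexity, since $-\lambda\mc{D}$ enters with a negative sign.

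\textbf{Step 3: well-posedness, the main obstacle.} System~\eqref{eqn:ODE_U} is a forward matrix Riccati equation with continuous coefficients, which are continuous because $F^\phi$ and $\mb{f}^\phi$ exist on $[0,T]$ for small $T$ by Theorem~\ref{thm:opt_adap_rho}. Its quadratic term has the positive sign $+F^\tau(\cdot)F^\tau$, so blow-up is possible. We will therefore prove only local existence. One route is a Picard/Gr\"onwall bound: $\|F^\tau_t\| \le \int_0^t(c_1+2c_2\|F^\tau_s\| + c_3\|F^\tau_s\|^2)\ud s$, whose comparison scalar Riccati solution is of tangent type. This gives existence for $T$ below an explicit $\pi/2$-type threshold, as in~\eqref{eqn:exist_interval}. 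Once $F^\tau$ exists, the $\mb{f}^\tau$ equation is linear and hence globally solvable on that interval, and uniqueness follows from local Lipschitz continuity.
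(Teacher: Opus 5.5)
Your route is the paper's. You evaluate the cost under $\rho^*(\tau)$ through the quadratic value function at time $0$, so that only $\Phi_0\transpose\mb{f}^\phi_0+c^\phi_0$ depends on $\tau$. You then treat this as a deterministic LQ problem with backward state $\mb{f}^\phi$ and open-loop control $\tau$, reverse time, apply Pontryagin, and decouple with an affine ansatz. Your costate $p$ with $p_0=\Phi_0$ is the paper's $y+\Phi_0$; the paper instead folds the boundary term $\Phi_0\transpose\mb{f}^\phi_0$ into the running cost so that $y$ vanishes at the reversed terminal time.

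Two statements in your Step 1 are wrong. They are harmless for the optimizer once corrected.

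First, the diffusion of $\Phi$ is $\mathrm{Concat}(\mc{T}_t,\mc{T}_t,0)(\sigma_W\sigma_W\transpose)^{1/2}\ud\tilde I_t$, not $G_t(\sigma_W\sigma_W\transpose)^{1/2}\ud\tilde I_t$. Your own remark that $\Delta X$ is noise-free contradicts the latter. $G_t$ is the drift loading of $\tau$, $d^\tau_t=G_t\tau_t$, and the noise contributes only the $\tau$-independent term $\tfrac12\Tr(\Sigma^\phi_tF^\phi_t)$.

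Second, the $G_t\sigma_W\sigma_W\transpose G_t\transpose$ term therefore does not come from the noise. The $\tau$-relevant running cost is $-\tfrac12(\mb{f}^\phi_t)\transpose O_t\mb{f}^\phi_t+(\mb{f}^\phi_t)\transpose G_t\tau_t+\tfrac12\tau_t\transpose(\sigma_W\sigma_W\transpose)^{-1}\tau_t$ plus linear terms. Eliminating $\tau$ from the cross term yields the effective state cost $-\tfrac12(\mb{f}^\phi)\transpose(O+G\sigma_W\sigma_W\transpose G\transpose)\mb{f}^\phi$. If you inserted a positive $(O+G\sigma_W\sigma_W\transpose G\transpose)$-term on top of the cross term, you would not recover \eqref{eqn:ODE_U}.

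This negative semidefinite effective state cost is what makes blow-up of $F^\tau$ possible. It is also what destroys convexity of the reduced problem in $(\mb{f}^\phi,\tau)$. It is present even at $\lambda=0$, since it stems from the inner minimization over $\rho$, not only from $-\lambda\mc{D}$.

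Your plan to prove sufficiency by completing the square along the Riccati solution is therefore the right one. It is valid whenever $F^\tau$ exists on $[0,T]$, because the $\tau$-weight $(\sigma_W\sigma_W\transpose)^{-1}$ is positive definite. It is also more careful than the paper, which cites Mangasarian-type sufficiency even though its Hamiltonian contains the concave state term $-\tfrac12 u\transpose O_{T-t}u$.

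Likewise, your tangent-type comparison bound for local existence of $F^\tau$ supplies an argument the paper omits. To make it work, use two facts. The coefficients of \eqref{eqn:ODE_U} involve $F^\phi$ but not $\mb{f}^\phi$, which depends on the unknown $\tau^*$. And $F^\phi$ is bounded uniformly for small $T$, so the existence threshold does not shrink with $T$.
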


Combining Theorems~\ref{thm:opt_adap_rho} and~\ref{thm:opt_adap_tau} yields a constructive procedure for computing the optimal adaptive attack:
\vspace{-1em}
\begin{enumerate}[label=(\roman*)]
    \item solve~\eqref{eqn:ODE_adap_rho} for \(F^\phi\);
    \item solve~\eqref{eqn:ODE_U} for \((F^\tau, \mb{f}^\tau)\) and obtain $\tau^\ast$ from~\eqref{eqn:opt_adap_tau};
    \item solve~\eqref{eqn:ODE_adap_rho} for \(\mb{f}^\phi\) under \(\tau^*\);
    \item derive \(\rho^*(\tau^*)\) from eq.~\eqref{eqn:opt_adp_rho}.
\end{enumerate}

Overall, the hierarchical construction preserves tractability without eliminating feedback adaptation: the optimal state attack exploits the progressively revealed information, while the observation attack is optimized at the outer level to balance stealthiness and effectiveness.

\section{Numerical experiments}\label{sec:numerics}
In this section, we compute the optimal deterministic and adaptive attacks \((\rho^*,\tau^*)\) by solving the ODE systems~\eqref{eqn:R}, \eqref{eqn:ODE_agent}, \eqref{eqn:ODE_det}, \eqref{eqn:ODE_adap_rho} and~\eqref{eqn:ODE_U}.
All ODEs are solved using an eighth-order Runge–Kutta method on a fine temporal grid. The underlying stochastic dynamics are simulated using the Euler scheme over the time horizon $[0,T]$, discretized into $N_T = 1,000$ uniform subintervals. Performance degradation and stealthiness are estimated via Monte Carlo with 25,000 independent trajectories.

\subsection{One-dimensional mean-reverting dynamics}\label{subsec:1d}

We first consider a one-dimensional model with \(d = c = m = p = q = 1\) over the time horizon \([0,0.5]\), with the following model parameters
\begin{equation}
    \begin{aligned}
    &A \equiv -1,\ B \equiv 1,\ H \equiv 1,\ a \equiv h \equiv 0,\ x_0 = 0.5,\ R_0 = 0,\\
    &\sigma_V = 0.6,\ \sigma_W = 0.4,\ P\equiv S \equiv 1,\ Q \equiv 10,\ r\equiv 0.
    \end{aligned}
\end{equation}
The system parameters correspond to a mean-reverting state process toward zero, and the agent aims to keep the state close to zero.

Figure~\ref{fig:1d_det} (resp. Figure~\ref{fig:1d_adpt}) shows the trajectories of \(X^c\) and \(\hat{X}^a\) under optimal deterministic (resp. adaptive) attacks \((\rho^*,\tau^*)\) for different values of the trade-off parameter \(\lambda\).
When \(\lambda = 0\), both optimal attacks vanish \(\rho^*\equiv\tau^*\equiv 0\), reflecting the absence of adversarial incentives.
Larger values of \(\lambda\) lead to attacks of greater magnitudes, inducing a visible bias in the state trajectory \(X^c\) but not in \(\hat{X}^a\). This highlights the stealthy nature of the attack: the system is significantly perturbed without being detected through the agent’s filtering mechanism.

The top-right panel illustrates the trade-off between effectiveness and stealthiness: as \(\lambda\) increases, the optimal attacks are more effective but easier to detect.
Notably, stealthiness increases by orders of magnitude. 
Although its Monte Carlo estimate is extremely small, the associated variance is of an even lower order, indicating high statistical accuracy.

\begin{figure}[htbp]
    \centering
    \includegraphics[width=0.9\linewidth]{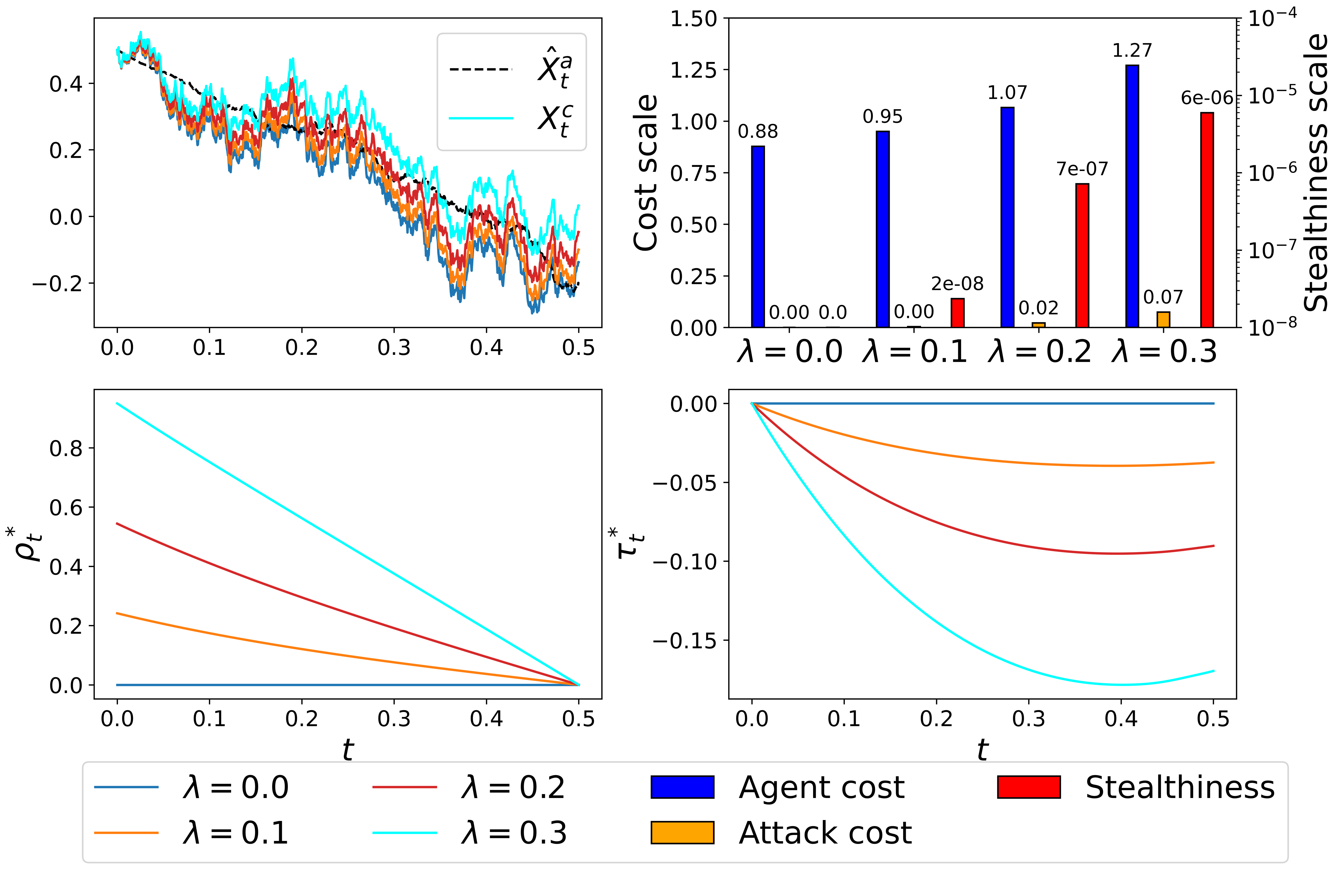}
    \caption{Trajectories under optimal deterministic attacks \((\rho^*,\tau^*)\) for different values of \(\lambda\) in the 1D mean-reverting model. The agent's filter \(\hat{X}^a\) is plotted only for \(\lambda = 0\) due to minimal variation across cases.}
    \label{fig:1d_det}
\end{figure}

\begin{figure}[htbp]
    \centering
    \includegraphics[width=0.9\linewidth]{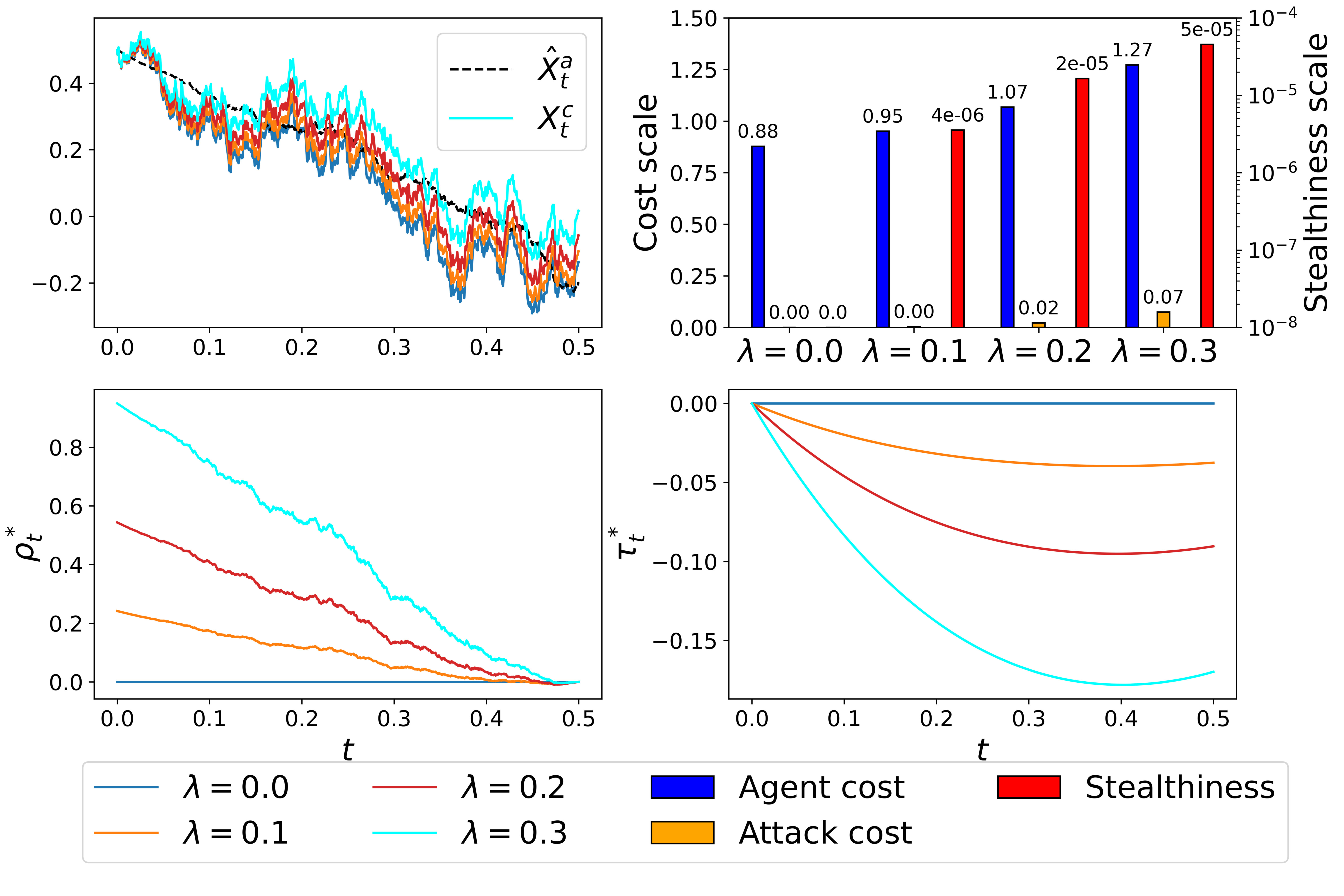}
    \caption{Trajectories under optimal adaptive attacks \((\rho^*,\tau^*)\) for different values of \(\lambda\) in the 1D mean-reverting model. 
    The agent's filter \(\hat{X}^a\) is plotted only for \(\lambda = 0\) due to minimal variation across cases.
    }
    \label{fig:1d_adpt}
\end{figure}

Figure~\ref{fig:comp} compares the proposed optimal deterministic and adaptive attacks with several alternative attack strategies, including a random Gaussian attack where \(\rho\) and \(\tau\) are independent standard Gaussian white noises, and a sinusoidal attack where \(\rho_t = -\tau_t = \sin (8\pi t)\).
Our attack designs produce visibly large differences in the trajectories of \(X^c\) and \(I^a\) compared to the baseline case (\(\lambda = 0\)), while more effectively increasing the agent's expected cost yet remaining harder to detect.
In Figure~\ref{fig:comp}, we also plot the detectability residual for deterministic attacks, defined as the residual of equation~\eqref{eqn:innov_cond}, which measures detectability under innovation-based detectors.
As observed, the optimal deterministic attack remains stealthy by keeping the detectability residual consistently low.

Finally, we compare the optimal deterministic and adaptive attacks: the difference in \(\tau^*\) is small, while the state attacks \(\rho^*\) differ more significantly. The adaptive one exhibits higher variability, reflecting its dependence on the observed trajectory. In this example, the optimal adaptive attack achieves greater performance degradation due to its ability to exploit real-time information.

\begin{figure}[htbp]
    \centering
    \includegraphics[width=0.9\linewidth]{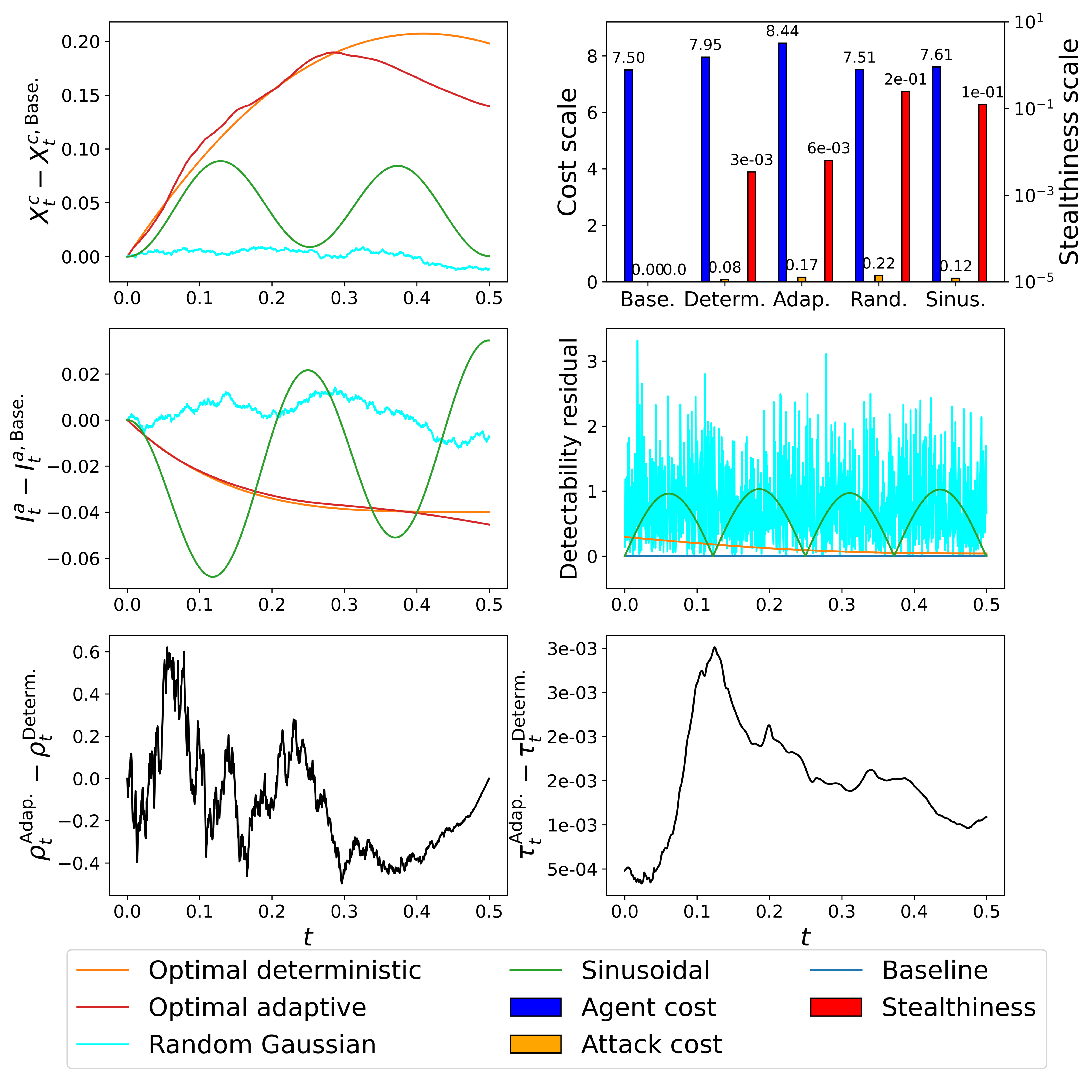}
    \caption{Comparison of different attack strategies in the 1D mean-reverting model.
    The proposed deterministic and adaptive attacks (with \(\lambda = 0.3\)) are compared with Gaussian and sinusoidal perturbations.
    Modified parameters \(\sigma_V = \sigma_W =1,R_0 = 2\) are used for improved visualization.}
    \label{fig:comp}
\end{figure}

\subsection{Two-dimensional tracking objective}\label{subsec:2d}
We next consider a two-dimensional model with \(d = c = m = p = q = 2\), on the time horizon \([0,0.5]\) with the following model parameters
\begin{equation}
\begin{aligned}
    &A \equiv 0,\ B \equiv P\equiv I_2,\ a \equiv h \equiv 0, \ \sigma_W = 0.1I_2,\ S = 0.5I_2,\\
    &R_0 = 0.001I_2,\quad Q_t = (5+5t)I_2,\quad x_0 = \mathrm{Concat}(0.2,0),\\
    &H\equiv \begin{bmatrix}2 & 1\\ 0 & 3\end{bmatrix},\quad \sigma_V = \begin{bmatrix}0.1 & 0.05\\ 0.05 & 0.1\end{bmatrix},\quad r_t= \begin{bmatrix}2t\\2t\end{bmatrix}.
\end{aligned}
\end{equation}
The underlying state dynamics admit a velocity-position interpretation, where the agent selects a velocity based on observations to track the reference trajectory \(r\). 

Figure~\ref{fig:2d_det} (resp. Figure~\ref{fig:2d_adpt}) depicts the trajectories of \(X^c\) and \(\hat{X}^a\) under the optimal deterministic (resp. adaptive) attacks \((\rho^*,\tau^*)\) for different values of \(\lambda\).
The qualitative behavior is consistent with Section~\ref{subsec:1d}. For visualization, trajectories are plotted in the state space with the time dimension suppressed, and arrows indicate the direction of motion.

\begin{figure}[htpb]
    \centering
    \includegraphics[width=0.9\linewidth]{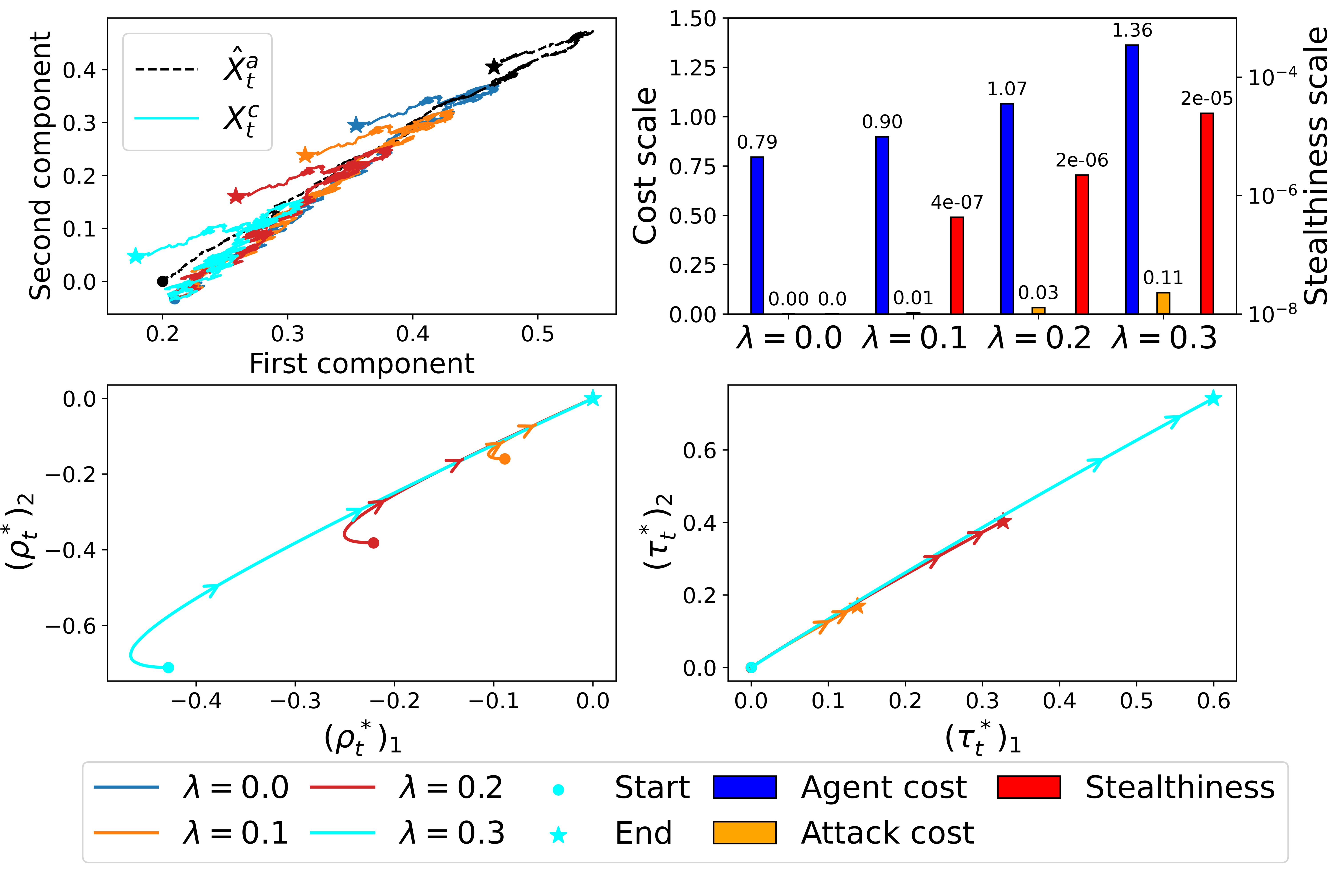}
    \caption{Trajectories under optimal deterministic attacks \((\rho^*,\tau^*)\) for different \(\lambda\) in the 2D tracking model. The state evolution is projected onto the spatial plane, with arrows indicating the direction of motion.
    The agent's filter \(\hat{X}^a\) is plotted only for \(\lambda = 0\) due to minimal variation across cases.
    }
    \label{fig:2d_det}
\end{figure}

\begin{figure}[htpb]
    \centering
    \includegraphics[width=0.9\linewidth]{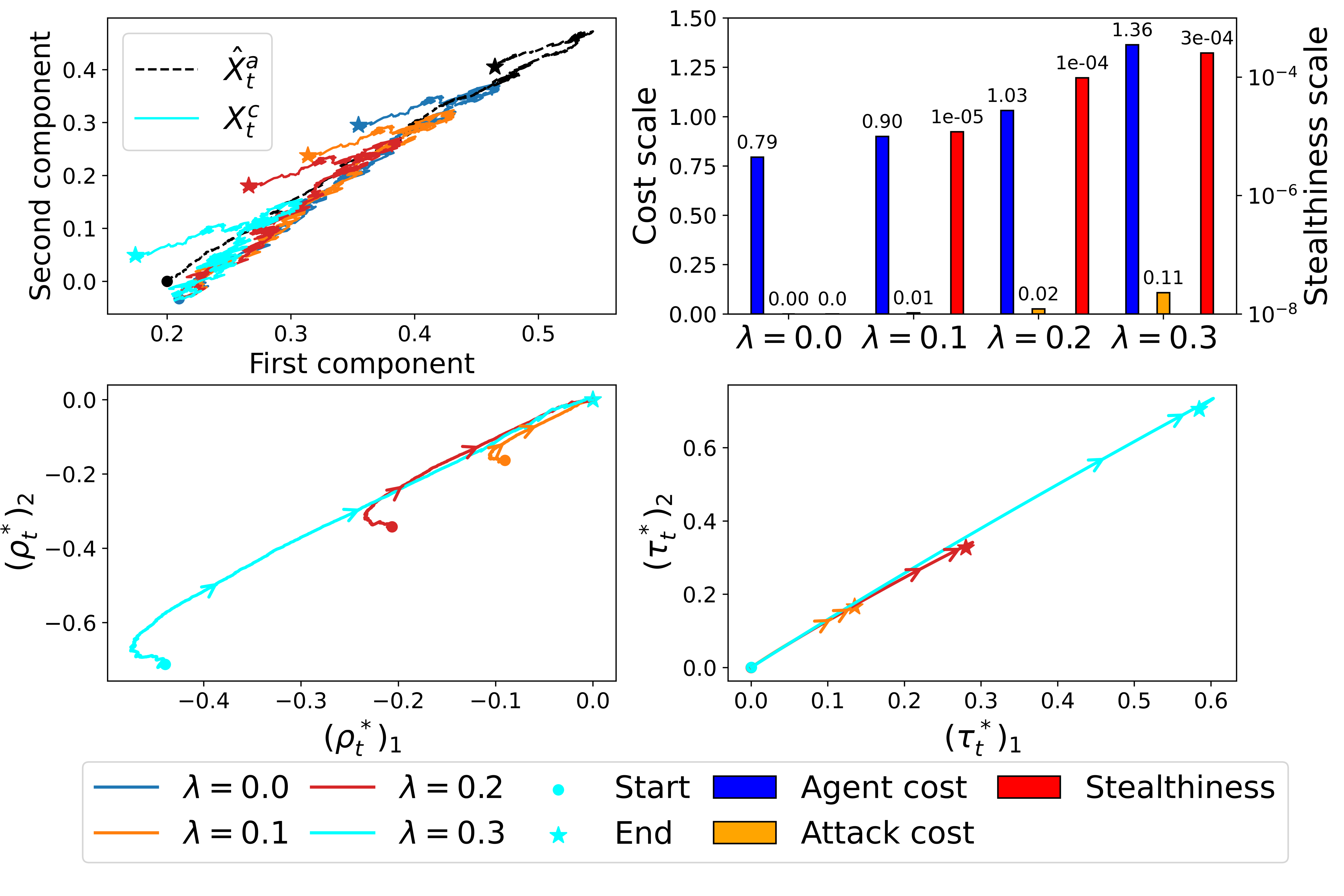}
    \caption{Trajectories under optimal adaptive attacks \((\rho^*,\tau^*)\) for different values of \(\lambda\) in the 2D tracking model. The state evolution is projected onto the spatial plane, with arrows indicating the direction of motion.
    The agent's filter \(\hat{X}^a\) is plotted only for \(\lambda = 0\) due to minimal variation across cases.
    }
    \label{fig:2d_adpt}
\end{figure}

\subsection{Multi-round extension}
As a final extension, we consider a multi-round interaction between an attacker adopting optimal deterministic attacks and an attack-aware defender with full knowledge about the injected attacks.
In each round, the defender updates its control \(u^*\) based on the previously applied attacks, treating \(a_t + \rho^*_t\) (resp. \(h_t + \tau^*_t\)) as the updated \(a_t\) (resp. \(h_t\)) in the next round. The attacker then recomputes the optimal deterministic attack in response, to further degrade the defender’s performance.

Figure 6 illustrates the evolution of the attack magnitude and the expected cost over multiple rounds, under the same model and parameters as in Section~\ref{subsec:1d}. As the interaction progresses, the defender partially mitigates the impact of attacks through adaptation, but the attacker counteracts by redesigning the attacks, leading to increasing attack magnitudes. This results in a monotone increase in the magnitude of the attacks and a corresponding rise in the system cost. These observations suggest that the interaction may fail to converge to a stable equilibrium under repeated adaptation. Moreover, even when the defender adjusts its strategy, the attacker remains effective in degrading performance. A rigorous analysis of this dynamic interaction is left for future work.

\begin{figure}[htbp]
    \centering
    \includegraphics[width=0.9\linewidth]{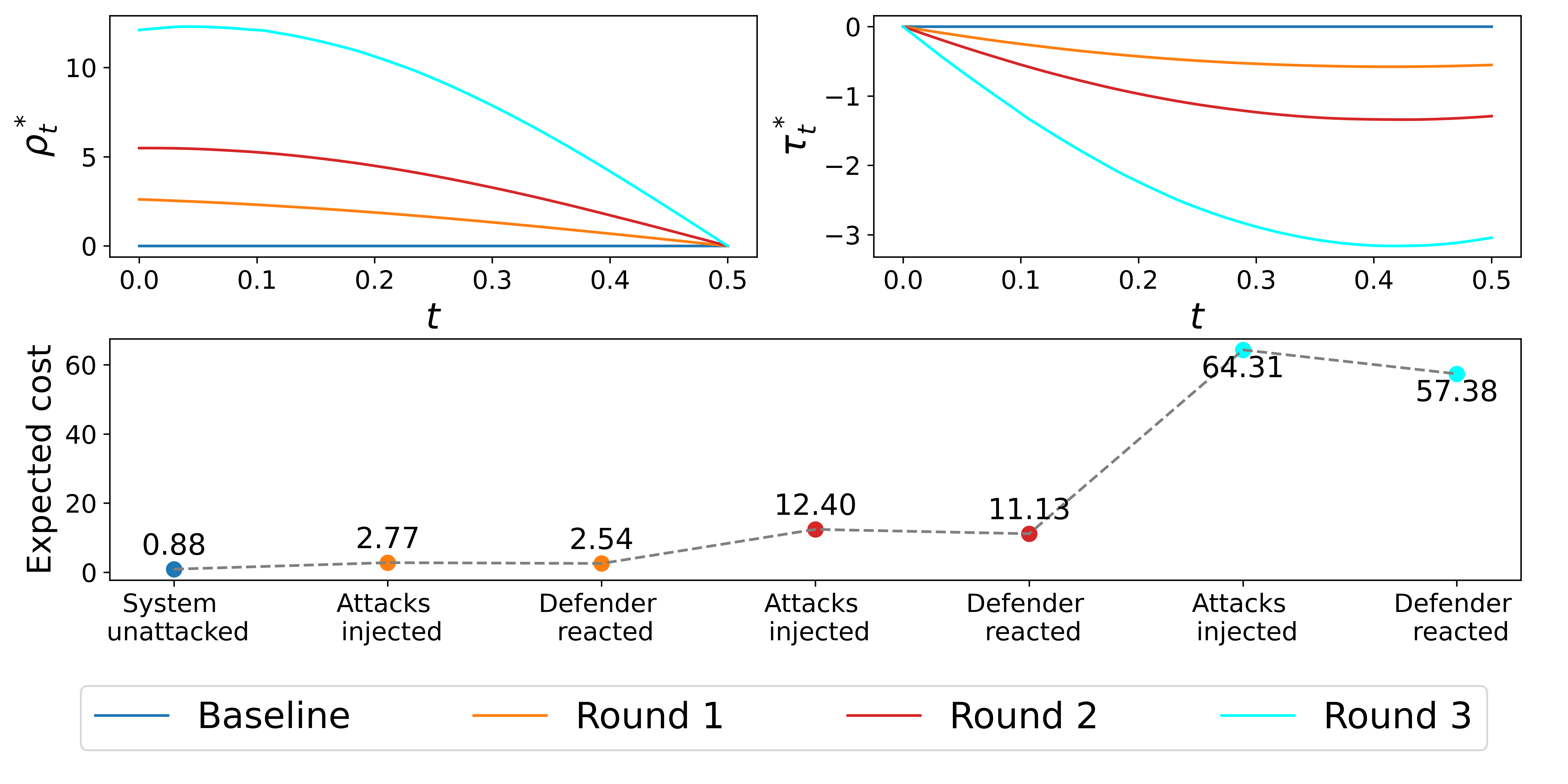}
    \caption{Evolution of attack magnitude and expected cost in the multi-round 1D mean-reverting model. At each round, the defender updates its control and the attacker recomputes the optimal deterministic attack with \(\lambda = 0.5\).
    }
    \label{fig:multi_round}
\end{figure}

Overall, the experiments demonstrate that the proposed framework produces highly effective yet statistically stealthy attacks under different information structures. Compared to heuristic ones, our attack designs achieve a better stealthiness-effectiveness trade-off.

\section{Conclusions and future studies}\label{sec:conclusions}

In this paper, we studied the optimal design of stealthy attacks against partially observable stochastic control systems.
Based on a novel likelihood-based attack detector constructed from the innovation process for linear filtering systems, we formulated deterministic and adaptive attack design problems from the attacker’s perspective.
By exploiting the linear-quadratic structure and applying control-theoretic techniques, we derived semi-explicit characterizations of the optimal attacks and established their well-posedness. Numerical experiments demonstrated the effectiveness and stealthiness of the proposed attacks compared with heuristic designs.

Several directions remain open for future research. One natural extension is the numerical study of stealthy attack design against GLRT-based detectors (see Remark~\ref{rem:interp_ell}).
Other direct extensions involve risk-sensitive agents \cite{whittle1981risk} or strategic attack-aware agents, who actively mislead the attacker by purposefully biasing its strategies \cite{zhou2025integrating,zhou2025adversarial}.
Beyond the present framework, further directions include nonlinear system dynamics where the separation principle no longer applies \cite{bensoussan1992stochastic}, stealthiness against alternative detection mechanisms such as sensor watermarking \cite{ferrari2017detection}, attacks on other system components \cite{fang2019stealthy}, and attacks against multi-agent networked systems \cite{babichenko2026forecasting,hu2025finite}.

\bibliographystyle{plain}        
\bibliography{ref}

\appendix 

\section{Proof of Proposition~\ref{prop:solution_agent} in Section~\ref{sec:agent}}

\begin{pf}[Proof of Proposition~\ref{prop:solution_agent}]
    By the separation principle \cite{davis1977}, solving the partially observable control problem~\eqref{eqn:X}--\eqref{eqn:adm_separation} is equivalent to solving the fully observable Markovian control problem under state dynamics~\eqref{eqn:X_hat}. 
    
    Denote by \(V(t,x)\) the associated value function, where \(x\in\R^d\) denotes the state variable for \(\hat{X}\).
    By the dynamic programming principle (DPP), \(V\) satisfies the Hamilton-Jacobi-Bellman (HJB) equation
\begin{multline}
    \partial_t V + \tfrac{1}{2}\mathrm{Tr}(R_tH_t\transpose (\sigma_W\sigma_W\transpose)^{-1}H_tR_t\transpose \partial_{xx} V) + \\
    \inf_u\big\{(A_tx + B_tu + a_t)\transpose \partial_{x} V \\
    + (x - r_t)\transpose Q_t (x - r_t) + u\transpose S_t u\big\} = 0,
\end{multline}
with terminal condition \(V(T,x) =0\).
Solving for the infimum yields
\begin{equation}
    u^*(t,x) = -\tfrac{1}{2}S_t^{-1}B_t\transpose \partial_x V.
\end{equation}
Plugging the optimal control \(u^*\) into the HJB equation, using the quadratic ansatz \(V(t,x) = x\transpose F_t x + \mb{f}_t\transpose x + c_t\), where \(c\in C([0,T];\R)\), and collecting coefficients yield the ODE system~\eqref{eqn:ODE_agent}.
Notably, the solution to \(c\) exists whenever \(F\) and \(\mb{f}\) exist.
Plugging the ansatz of the value function into \(u^*\) yields equation~\eqref{eqn:opt_ctrl}.

Finally, the well-posedness result follows from \cite[Theorem~4.1.6]{abou2012matrix}, which concludes the proof.
\end{pf}

\section{Proofs of Propositions~\ref{prop:detection}, \ref{prop:stealthy} and~\ref{prop:innov_BM} in Section~\ref{sec:detector} }\label{sec:A}
The proof of Proposition~\ref{prop:detection} relies on Lemma~\ref{lem:lkhd}, a variant of \cite[Theorem~7.13]{liptser2013statistics}.
Unlike the original result, this variant does not explicitly assume the martingale property of the exponential local martingale when applying the Girsanov theorem.
Instead, the Novikov condition is checked via a localization argument (see, e.g., the proof of \cite[Theorem~7.19]{liptser2013statistics}), yielding conditions that are easier to verify in our setting.

\begin{lem}
    \label{lem:lkhd}
    Let \(W\) be an \(\R^m\)-Brownian motion in  under the filtration \(\{\mathscr{G}_t\}\). 
    Let \(\xi\) be an \(\R^m\)-valued process satisfying \(\ud \xi_t = \beta_t\ud t + \ud W_t\) and \(\xi_0 = 0\), where \(\beta_t\in \mathscr{G}_t,\ \forall t\in[0,T]\).
    If the following conditions hold:
    \begin{enumerate}
        \item \(\int_0^T \E \|\beta_t\|\ud t<\infty\).
        \item \(\int_0^T \|\E (\beta_t|\F^\xi_t)\|^2\ud t <\infty \ \mathrm{a.s.}\).
    \end{enumerate}
    Then \(\mu_\xi \ll \mu_W\), and
    \begin{equation}
        \frac{\ud \mu_\xi}{\ud \mu_W}(\xi) = e^{\int_0^T \E(\beta_t|\F^\xi_t)\transpose\ud\xi_t - \frac{1}{2}\int_0^T \|\E(\beta_t|\F^\xi_t)\|^2\ud t}.
    \end{equation}
\end{lem}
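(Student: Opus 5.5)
The plan is to reduce to the innovation theorem of filtering theory and then to Girsanov's theorem, with the hypotheses supplying Novikov's condition through localization. Set \(\hat\beta_t := \E(\beta_t|\F^\xi_t)\), choosing an \(\{\F^\xi_t\}\)-progressively measurable version; condition 1 ensures this conditional expectation is well defined, since \(\beta_t\) is integrable for a.e.\ \(t\). Then define
\[
\bar W_t := \xi_t - \int_0^t \hat\beta_s\ud s .
\]
The first step is to show that \(\bar W\) is an \(\{\F^\xi_t\}\)-Brownian motion. I would follow the standard innovation argument (cf.\ Liptser--Shiryaev, Theorem~7.12). For \(s\le t\), the tower property and the fact that \(W_t-W_s\) is independent of \(\mathscr{G}_s \supset \F^\xi_s\) give \(\E(\bar W_t-\bar W_s|\F^\xi_s)=0\). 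This uses condition 1 for integrability. The process \(\bar W\) is continuous and its quadratic variation equals that of \(\xi\), which is \(tI_m\). Lévy's characterization then finishes this step.

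With this in hand, \(\xi\) satisfies \(\ud\xi_t = \hat\beta_t\ud t + \ud\bar W_t\), and \(\hat\beta\) is adapted to the filtration of \(\xi\) itself. Hence \(\hat\beta_t = \hat b(t,\xi_{[0,t]})\) for some nonanticipative functional \(\hat b\). The second step is to localize. Let
\[
\tau_N := \inf\Big\{t:\int_0^t\|\hat\beta_s\|^2\ud s\ge N\Big\}\wedge T .
\]
Condition 2 gives \(\tau_N\uparrow T\) a.s., and also \(\PP(\tau_N<T)\to0\). On \([0,\tau_N]\) the stopped drift satisfies Novikov's condition trivially. The exponential
\[
Z^N := \exp\Big(-\int_0^{\tau_N}\hat\beta\transpose\ud\bar W - \tfrac12\int_0^{\tau_N}\|\hat\beta\|^2\ud t\Big)
\]
is therefore a true martingale. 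Under \(Z^N\ud\PP\), Girsanov's theorem makes \(\xi\) a Brownian motion up to \(\tau_N\).

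The third step transfers this to path space. For a Borel set \(\Gamma\) of paths with \(\mu_W(\Gamma)=0\), we get \(\PP(\xi\in\Gamma,\tau_N=T)=\E_{\QV{}{}}\)-type bound, more precisely \(\E^{Z^N}[(Z^N)^{-1}\mathbf 1\{\xi\in\Gamma,\tau_N=T\}]=0\), because \(\xi\) is Brownian under the new measure. Letting \(N\to\infty\) gives \(\mu_\xi\ll\mu_W\). For the density, on \(\{\tau_N=T\}\) the Radon--Nikodym derivative \(\ud\mu_\xi/\ud\mu_W\) evaluated at \(\xi\) equals \((Z^N)^{-1}\), written as a functional of \(\xi\). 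Substituting \(\ud\bar W=\ud\xi-\hat\beta\ud t\) yields
\[
\exp\Big(\int_0^T\hat\beta\transpose\ud\xi-\tfrac12\int_0^T\|\hat\beta\|^2\ud t\Big).
\]
The union of the events \(\{\tau_N=T\}\) has full measure, so the formula holds a.s.

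I expect the main obstacle to be this final identification. It requires the density to be a genuine function of the path, which is what the representation \(\hat b(t,\xi_{[0,t]})\) provides. It also requires the stochastic integral \(\int\hat b\transpose\ud\xi\) to be defined consistently under both \(\mu_\xi\) and \(\mu_W\), since condition 2 is stated only \(\PP\)-a.s. I would handle this by carrying out the entire argument on the events \(\{\tau_N=T\}\), mirroring the proof of Liptser--Shiryaev Theorem~7.19. The reverse inclusion \(\mu_W\ll\mu_\xi\) is not needed.
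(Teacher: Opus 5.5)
Your proposal is correct and follows the paper's route. The paper gets the innovation representation \(\ud\xi_t = \E(\beta_t|\F^\xi_t)\ud t + \ud\BAR{W}_t\) from Liptser--Shiryaev Theorem~7.12, then cites Theorem~7.20 for diffusion-type processes, which needs only \(\int_0^T\|\E(\beta_t|\F^\xi_t)\|^2\ud t<\infty\) a.s.; you instead prove that step by hand (Novikov on the stopped drift, Girsanov, and identifying the density on the events \(\{\tau_N=T\}\)), which is exactly the localization the paper refers to through the proof of Theorem~7.19.
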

\begin{pf}
    By \cite[Theorem~7.12]{liptser2013statistics}, \(\xi\) admits a diffusion-type representation \(\ud\xi_t = \E(\beta_t|\F^\xi_t)\ud t + \ud \BAR{W}_t\), where \(\BAR{W}\) is an \(\{\F^\xi_t\}\)-adapted Brownian motion.
    Since \(\mu_W = \mu_{\BAR{W}}\), applying \cite[Theorem~7.20]{liptser2013statistics} concludes the proof.
\end{pf}

\begin{pf}[Proof of Proposition~\ref{prop:detection}]
Recall the dynamics of \(I^a\) given by equation~\eqref{eqn:I^a_W}.
To apply Lemma~\ref{lem:lkhd} for \(\mathscr{G}_t = \F_t\), \(\xi_t = I^a_t\) and \(\beta_t = (\sigma_W\sigma_W\transpose)^{-\frac{1}{2}}[H_t(X^c_t - \hat{X}^a_t) + \tau_t]\), the two conditions need to be verified.
Here, we first claim that condition~\eqref{eqn:cond_rho_tau} implies \(\int_0^T \E \|\beta_t\|^2\ud t<\infty\).

To see why this claim holds, note that
\begin{equation}
    \int_0^T \E \|\beta_t\|^2\ud t \leq C \int_0^T \E (\|X^c_t - \hat{X}^a_t\|^2 + \|\tau_t\|^2)\ud t,
\end{equation}
where \(C\) denotes some positive absolute constant.
Under condition~\eqref{eqn:cond_rho_tau}, it suffices to prove \(\int_0^T \E \|X^c_t - \hat{X}^a_t\|^2\ud t<\infty\).
By the triangle inequality,
\begin{multline}
    \int_0^T \E \|X^c_t - \hat{X}^a_t\|^2\ud t \\
    \leq 2\int_0^T \E (\|X^c_t - \hat{X}^c_t\|^2 + \|\hat{X}^c_t - \hat{X}^a_t\|^2)\ud t.
\end{multline}
By standard estimates for equation~\eqref{eqn:Delta_X}, \(\int_0^T \E\|\rho_t - R_tH_t\transpose (\sigma_W\sigma_W\transpose)^{-1}\tau_t\|^2\ud t<\infty\) implies \(\int_0^T \E \|\Delta X_t\|^2\ud t<\infty\).
On the other hand, since \(R_t = \mathrm{cov}(X^c_t|\F^{Y^c}_t)\),
\begin{equation}
    \E \|X^c_t - \hat{X}^c_t\|^2 = \Tr[\E (X^c_t - \hat{X}^c_t)(X^c_t - \hat{X}^c_t)\transpose] = \Tr(R_t).
\end{equation}
As a result, \(\int_0^T \E \|X^c_t - \hat{X}^a_t\|^2\ud t<\infty\), which proves the desired claim.

We now proceed to verifying the two conditions in Lemma~\ref{lem:lkhd}.
Firstly, it is clear that \(\int_0^T \E \|\beta_t\|^2\ud t<\infty\) implies \(\int_0^T \E \|\beta_t\|\ud t<\infty\).
For the second condition, we prove a stronger result: \(\E\int_0^T \|\E (\beta_t|\F^\xi_t)\|^2\ud t <\infty\).
Indeed, by Jensen's inequality,
\begin{multline}
    \E\int_0^T \|\E (\beta_t|\F^\xi_t)\|^2\ud t \leq \E\int_0^T \E (\|\beta_t\|^2|\F^\xi_t)\ud t\\
    = \int_0^T \E \|\beta_t\|^2\ud t<\infty.
\end{multline}

Finally, since \(\F^{I^a}_t = \F^{Y^c}_t,\ \forall t\in[0,T]\), Proposition~\ref{prop:DeltaX} implies \(\E(\beta_t|\F^{I^a}_t) = (\sigma_W\sigma_W\transpose)^{-\frac{1}{2}}(H_t\Delta X_t + \tau_t)\).
Applying Lemma~\ref{lem:lkhd} concludes the proof.
\end{pf}

\begin{pf}[Proof of Proposition~\ref{prop:stealthy}]
    Plugging equation~\eqref{eqn:I^a_W} into~\eqref{eqn:log_lkhd} and taking expectation on both sides yield
    \begin{multline}
    \mc{S}(\rho,\tau) = \E \int_0^T (H_t\Delta X_t + \tau_t)\transpose (\sigma_W\sigma_W\transpose)^{-1}\\
    [H_t(X^c_t - \hat{X}^a_t) + \tau_t]\ud t \\
    - \frac{1}{2}\E \int_0^T (H_t\Delta X_t + \tau_t)\transpose (\sigma_W\sigma_W\transpose)^{-1}(H_t\Delta X_t + \tau_t)\ud t.
    \end{multline}
    By the tower property, 
    \begin{align}
        &\E \int_0^T (H_t\Delta X_t + \tau_t)\transpose (\sigma_W\sigma_W\transpose)^{-1}[H_t(X^c_t - \hat{X}^a_t) + \tau_t]\ud t\\
        &= \E \Big[\int_0^T (H_t\Delta X_t + \tau_t)\transpose (\sigma_W\sigma_W\transpose)^{-1}\\
        &\qquad \qquad \qquad \E \Big(H_t(X^c_t - \hat{X}^a_t) + \tau_t\Big| \F^{Y^c}_t\Big)\ud t \Big]\\
        &= \E \Big[\int_0^T (H_t\Delta X_t + \tau_t)\transpose (\sigma_W\sigma_W\transpose)^{-1}(H_t\Delta X_t + \tau_t)\ud t \Big],
    \end{align}
    which concludes the proof. 
\end{pf}

\begin{pf}[Proof of Proposition~\ref{prop:innov_BM}]
    For any \(0\leq s\leq t\leq T\), by equation~\eqref{eqn:I^a_W},
    \begin{align}
        & \E (I^a_t - I^a_s|\F^{Y^c}_s)\\
        &= (\sigma_W\sigma_W\transpose)^{-\frac{1}{2}}\E \Big( \int_s^t [H_u(X^c_u - \hat{X}^a_u) + \tau_u]\ud u \Big| \F^{Y^c}_s\Big) \\
        &\quad + (\sigma_W\sigma_W\transpose)^{-\frac{1}{2}}\sigma_W \E (W_t-W_s|\F^{Y^c}_s).
    \end{align}
    Since \(W_t-W_s\) is independent of \(\F_s\supset \F^{Y^c}_s\), by the tower property, \( \E (W_t-W_s|\F^{Y^c}_s) = 0\).
    Similarly, since \(\F^{Y^c}_s\subset \F^{Y^c}_u\), \(\E (X^c_u - \hat{X}^a_u| \F^{Y^c}_s) = \E (\Delta X_u| \F^{Y^c}_s)\).
    The equivalent condition for \(I^a\) being an \(\{\F^{Y^c}_t\}\)-adapted continuous martingale is thus proved.
    
    When \(\rho\) and \(\tau\) are deterministic, by Proposition~\ref{prop:DeltaX}, \(H\Delta X + \tau\) is a deterministic continuous function.
    The equivalent condition becomes \(H\Delta X + \tau \equiv 0\).
    Combining with equation~\eqref{eqn:Delta_X} yields
    \begin{equation}
        \ud \Delta X_t = (A_t\Delta X_t + \rho_t)\ud t,\quad \Delta X_0 = 0.
    \end{equation}
    When \(A_{t_1}\) and \(A_{t_2}\) commute \(\forall t_1,t_2\in[0,T]\), this ODE has an analytical solution 
    \begin{equation}
        \Delta X_t = e^{\int_0^t A_u\ud u}\int_0^t e^{-\int_0^s A_u\ud u} \rho_s\ud s,
    \end{equation}
    and \(H\Delta X + \tau \equiv 0\) is equivalent to condition~\eqref{eqn:innov_cond}.

    Finally, since \(I^a_0 = 0\) and the quadratic variation \(\QV{I^a}{I^a}_t = tI_m\) (cf. equation~\eqref{eqn:I^a_W}), by Levy's characterization, \(I^a\) is an \(\{\F^{Y^c}_t\}\)-adapted Brownian motion if and only if it is an \(\{\F^{Y^c}_t\}\)-adapted continuous martingale.
    This concludes the proof.
\end{pf}

\section{Proofs of Proposition~\ref{prop:degrad_det} and Theorem~\ref{thm:solution_det} in Section~\ref{sec:det}}\label{sec:C}

\begin{pf}[Proof of Proposition~\ref{prop:degrad_det}]
    Plugging equations~\eqref{eqn:Yc} and~\eqref{eqn:ua} into equation~\eqref{eqn:Xa} yields
    \begin{multline}
        \label{eqn:Xa_W}
        \ud \hat{X}^a_t = \Big[\Theta_tX^c_t + (A_t - K_t F_t - \Theta_t)\hat{X}^a_t + \mc{T}_t\tau_t \\
        + a_t - \tfrac12 K_t \mb{f}_t\Big]\ud t
        + \mc{T}_t\sigma_W\ud W_t,\quad \hat{X}^a_0 = x_0.
    \end{multline}
    Plugging equation~\eqref{eqn:ua} into equation~\eqref{eqn:Xc} yields
    \begin{multline}
        \ud X^c_t = \Big(A_tX^c_t -K_t F_t \hat{X}^a_t + \rho_t + a_t- \tfrac12 K_t \mb{f}_t \Big)\ud t + \sigma_V\ud V_t,\\ X^c_0\sim N(x_0,R_0).
    \end{multline}
    As a result, \(\Psi_t := \mathrm{Concat}(X_t^c,\hat{X}^a_t)\) satisfies the linear SDE:
    \begin{equation}
        \ud \Psi_t = \Big(\mathcal{A}_t\Psi_t + \mathcal{B}_t\begin{bmatrix}\rho_t\\\tau_t\end{bmatrix} + \alpha_t\Big)\ud t + \mathcal{V}_t\begin{bmatrix}\ud V_t\\\ud W_t\end{bmatrix},
    \end{equation}
    with a Gaussian initial condition \(\Psi_0\) and $\mathcal{A}_t$, $\mathcal{B}_t$, $\mathcal{V}_t$ specified in Proposition~\ref{prop:degrad_det}. This justifies the Gaussianity of \(\Psi_t\).
    
    Taking expectations on both sides of the dynamics of \(\Psi\) yields the dynamics of \(m\) (cf. equation~\eqref{eqn:m_Sigma}).
    Since \(\Sigma_t = \E (\Psi_t-m_t) (\Psi_t-m_t) \transpose\), applying It{\^o}'s formula yields 
    \begin{multline}
        \ud (\Psi_t-m_t) (\Psi_t-m_t) \transpose = (\Psi_t - m_t)\ud (\Psi_t-m_t)\transpose \\
        + [\ud(\Psi_t - m_t)] (\Psi_t - m_t)\transpose + \mathcal{V}_t\mathcal{V}_t\transpose \ud t.
    \end{multline}
    Using the dynamics of \(\Psi\) and \(m\), integrating and taking expectations on both sides yield the dynamics of \(\Sigma\) (cf. equation~\eqref{eqn:m_Sigma}).
    
    Lastly, we proceed to proving the expression of \(\mc{D}(\rho,\tau)\).
    Firstly, note that
    \begin{align}
        & \E (X^c_t - r_t)\transpose Q_t (X^c_t - r_t) \\
        &= \E (X^c_t - m^c_t)\transpose Q_t (X^c_t - m^c_t) \\
        &\quad +  (m^c_t - r_t)\transpose Q_t (m^c_t - r_t) \\
        &=\mathrm{Tr}(Q_t  \Sigma^{cc}_t) +  (m^c_t - r_t)\transpose Q_t (m^c_t - r_t).
    \end{align}
    Similarly, by equations~\eqref{eqn:opt_ctrl} and~\eqref{eqn:ua}, 
    \begin{align}
        \E (u^a_t)\transpose S_t u^a_t &= \E (F_t\hat{X}^a_t + \tfrac12 \mb{f}_t)\transpose K_t (F_t\hat{X}^a_t + \tfrac12 \mb{f}_t)\\
        &= \mathrm{Tr}(F_tK_t F_t\Sigma^{aa}_t) \\
        &\quad + (F_tm^a_t + \tfrac12 \mb{f}_t)\transpose K_t  (F_tm^a_t + \tfrac12 \mb{f}_t).
    \end{align}
    Plugging those equations into definition~\eqref{eqn:def_D} concludes the proof.
\end{pf}

In the proof of Theorem~\ref{thm:solution_det}, the construction of the existence interval is based on the following Lemma~\ref{lem:PSD}, which is a direct corollary of the block Gershgorin circle theorem \cite[Theorem~1.13.1]{tretter2008spectral}.

\begin{lem}
    \label{lem:PSD}
    For any block matrix \(A\in\mathbb{S}^{dn\times dn}\) consisting of \(n^2\) blocks \(A_{ij}\in \mathbb{R}^{d\times d},\ \forall i,j\in[n]\), if \(A_{ii}\geq 0\) and \(\lambda_{\min}(A_{ii})\geq \sum_{j\neq i}\|A_{ij}\|_2\) for any \( i\in[n]\), where \(\lambda_{\min}(\cdot)\) denotes the minimum eigenvalue of a symmetric matrix, then \(A\geq 0\).
    Here, matrix inequalities are understood in the positive semi-definite sense.
\end{lem}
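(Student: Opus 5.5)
Lemma~\ref{lem:PSD} is the statement to prove. It says that a symmetric block matrix $A=(A_{ij})_{i,j\in[n]}$ is positive semidefinite whenever each diagonal block $A_{ii}$ is positive semidefinite and $\lambda_{\min}(A_{ii})\geq \sum_{j\neq i}\|A_{ij}\|_2$. I plan to prove it directly from the quadratic form, which avoids depending on the exact form of the block Gershgorin theorem. Fix $x=\mathrm{Concat}(x_1,\dots,x_n)\in\R^{dn}$ with $x_i\in\R^d$. Symmetry of $A$ gives $A_{ji}=A_{ij}\transpose$, so
\begin{equation}
x\transpose A x = \sum_{i} x_i\transpose A_{ii}x_i + \sum_{i\neq j} x_i\transpose A_{ij}x_j .
\end{equation}

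First, bound the diagonal part from below. Since $A_{ii}$ is symmetric, $x_i\transpose A_{ii}x_i\geq \lambda_{\min}(A_{ii})\|x_i\|^2$. Next, bound each off-diagonal term using Cauchy--Schwarz together with the definition of the operator norm, and then AM--GM:
\begin{equation}
|x_i\transpose A_{ij}x_j|\leq \|A_{ij}\|_2\|x_i\|\|x_j\|\leq \tfrac12\|A_{ij}\|_2(\|x_i\|^2+\|x_j\|^2).
\end{equation}
Summing over the ordered pairs $i\neq j$ gives a total of $\tfrac12\sum_{i\neq j}\|A_{ij}\|_2\|x_i\|^2+\tfrac12\sum_{i\neq j}\|A_{ij}\|_2\|x_j\|^2$. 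Swap the indices in the second sum and use $\|A_{ji}\|_2=\|A_{ij}\transpose\|_2=\|A_{ij}\|_2$. The two halves then coincide, and the off-diagonal contribution is bounded in absolute value by $\sum_i\big(\sum_{j\neq i}\|A_{ij}\|_2\big)\|x_i\|^2$.

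Combining the two bounds,
\begin{equation}
x\transpose A x\geq \sum_i\Big(\lambda_{\min}(A_{ii})-\sum_{j\neq i}\|A_{ij}\|_2\Big)\|x_i\|^2\geq 0,
\end{equation}
where the last inequality is exactly the hypothesis. Hence $A\geq 0$. The only step that needs care is the symmetrization: the row-sum hypothesis must control the off-diagonal terms on both the $\|x_i\|^2$ side and the $\|x_j\|^2$ side, and this works precisely because symmetry makes $\|A_{ij}\|_2=\|A_{ji}\|_2$. An alternative route is to invoke the block Gershgorin theorem, which places every eigenvalue of $A$ in some disc $\{\mu : \|(A_{ii}-\mu I)^{-1}\|^{-1}\leq \sum_{j\neq i}\|A_{ij}\|\}$; for symmetric $A_{ii}$ one has $\|(A_{ii}-\mu I)^{-1}\|^{-1}=\mathrm{dist}(\mu,\mathrm{spec}(A_{ii}))$, so any negative $\mu$ would violate the hypothesis. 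I will keep the direct argument above as the primary proof.
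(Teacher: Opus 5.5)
Your proof is correct, but it is not the route the paper takes. The paper gives no argument beyond calling the lemma a direct corollary of the block Gershgorin circle theorem (Tretter, Theorem~1.13.1), which is essentially the alternative you sketch at the end.

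Your primary argument works with the quadratic form directly. You apply Cauchy--Schwarz and AM--GM to each off-diagonal term. You then swap indices using $\|A_{ji}\|_2=\|A_{ij}\transpose\|_2=\|A_{ij}\|_2$, so that the row-sum hypothesis controls both the $\|x_i\|^2$ and the $\|x_j\|^2$ contributions. This is self-contained, uses only elementary inequalities, and shows exactly where symmetry of $A$ enters. It also gives the quantitative bound $\lambda_{\min}(A)\geq \min_i\bigl(\lambda_{\min}(A_{ii})-\sum_{j\neq i}\|A_{ij}\|_2\bigr)$.

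The Gershgorin route is a one-line citation and yields a more general spectral inclusion, valid also for non-symmetric block matrices. To extract positivity from it, however, one still needs two further facts, both of which you correctly supply in your sketch:
\begin{itemize}
\item the eigenvalues of the symmetric matrix $A$ are real;
\item for symmetric blocks, $\|(A_{ii}-\mu I)^{-1}\|_2^{-1}=\mathrm{dist}(\mu,\mathrm{spec}(A_{ii}))$.
\end{itemize}
For the purpose at hand, your direct argument is arguably the cleaner proof.
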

    
    \begin{pf}[Proof of Theorem~\ref{thm:solution_det}]
        Following the Pontryagin's maximum principle \cite{mangasarian1966sufficient,seierstad1977sufficient}, we first specify the Hamiltonian associated with the control problem~\eqref{eqn:det_obj} under the state dynamics~\eqref{eqn:m_Sigma}.
        By Proposition~\ref{prop:degrad_det} and equation~\eqref{eqn:DeltaX_det}, we define the Hamiltonian $H$ by
    \begin{multline}
    \label{eqn:H}
    H(t,m^c,m^a,y^c,y^a,\rho,\tau) 
    := \\ (y^c)\transpose(A_tm^c -K_t F_t m^a + a_t - \tfrac12 K_t \mb{f}_t + \rho)+ \tfrac12\rho\transpose P_t\rho\\
     + (y^a)\transpose\Big[\Theta_tm^c +  (A_t - K_t F_t - \Theta_t)m^a + a_t - \tfrac12 K_t \mb{f}_t + \mc{T}_t\tau\Big]\\
    + \tfrac{1}{2}[H_t(m^c - m^a) + \tau]\transpose (\sigma_W\sigma_W\transpose)^{-1}[H_t(m^c - m^a) + \tau] \\
    - \lambda \Big[  (m^c - r_t)\transpose Q_t (m^c - r_t) \\
    + (F_tm^a + \tfrac12 \mb{f}_t)\transpose K_t  (F_tm^a + \tfrac12 \mb{f}_t)\Big],
\end{multline}
where \(y^c,y^a\in\R^d\) denote the adjoint variables associated with \(m^c,m^a\).

Minimizing the Hamiltonian with respect to \(\rho\) and \(\tau\) yields the candidate of optimal controls
\begin{equation}
    \rho^* = -P_t^{-1}y^c,\quad \tau^* = -H_tR_ty^a - H_t(m^c - m^a).
\end{equation}
Plugging the optimal controls \(\rho^*\) and \(\tau^*\) into equation~\eqref{eqn:m_Sigma} yields the forward equations
\begin{equation}
\begin{aligned}
    \dot{m}^c_t &= A_tm^c_t -K_t F_t m^a_t + a_t - \tfrac12 K_t \mb{f}_t -P_t^{-1}y^c_t,\\
    \dot{m}^a_t &=   (A_t - K_t F_t)m^a_t + a_t - \tfrac12 K_t \mb{f}_t - \Lambda_ty^a_t,
\end{aligned}
\end{equation}
with \(m^c_0 = m^a_0 = x_0\).
Next, we compute the state gradients of the Hamiltonian
\begin{align}
    \partial_{m^c}H &= A_t\transpose y^c + \Theta_t\transpose y^a + H_t\transpose(\sigma_W\sigma_W\transpose)^{-1}\\
    &\quad [H_t(m^c-m^a)+\tau]  - 2\lambda Q_t(m^c - r_t) ,\\
    \partial_{m^a}H &= -F_tK_t y^c + (A_t\transpose - F_tK_t - \Theta_t\transpose)y^a  \\
    &\quad - H_t\transpose(\sigma_W\sigma_W\transpose)^{-1}[H_t(m^c-m^a) + \tau]\\
    &\qquad - \lambda F_tK_t (2F_tm^a+\mb{f}_t),
\end{align}
which provide the adjoint equations satisfied by \(y^c\) and \(y^a\):
\begin{equation}
\begin{aligned}
    \dot{y}^c_t &= -A_t\transpose y^c_t + 2\lambda Q_t(m^c_t - r_t),\\
    \dot{y}^a_t &= F_tK_t y^c_t - (A_t\transpose - F_tK_t)y^a_t + \lambda F_tK_t (2F_tm^a_t+\mb{f}_t),
\end{aligned}
\end{equation}
with \(y^a_T = y^c_T = 0\).

The optimal controls \(\rho^*\) and \(\tau^*\) are thus characterized by the solution to the forward-backward-ODE (FBODE) system above.
We solve this system by using the affine ansatz
\begin{equation}
    y^c_t = F^c_tm^c_t + F^a_tm^a_t + \mb{f}^\rho_t,\quad y^a_t = G^c_tm^c_t + G^a_tm^a_t + \mb{g}^\tau_t.
\end{equation}
Plugging the ansatz into the FBODEs and collecting coefficients yield system~\eqref{eqn:ODE_det}, which proves the first part of Theorem~\ref{thm:solution_det}.

Next, we prove well-posedness results associated with the ODE system~\eqref{eqn:ODE_det}.
We first focus on establishing existence intervals for \(F^c,F^a,G^c,G^a\).
    Let
    \begin{equation}
    M_t := 
    \begin{bmatrix}
        F^c_t & F^a_t\\
        G^c_t & G^a_t
    \end{bmatrix}\in\R^{2d\times 2d}.
    \end{equation}
    Rewriting the ODE system~\eqref{eqn:ODE_det} in terms of \(M\) yields
    \begin{equation}
    \dot{M}_t + \mc{P}_t\transpose M_t + M_t \mc{P}_t - M_t\mc{R}_tM_t + \mc{Q}_t = 0,\quad M_T = 0,
\end{equation}
where the coefficients
\begin{multline}
    \mc{P}_t := \begin{bmatrix}
        A_t & -K_tF_t\\
        0 & A_t-K_tF_t
    \end{bmatrix},\\
    \mc{Q}_t := \begin{bmatrix}
        -2\lambda Q_t & 0\\
        0 & -2\lambda F_tK_tF_t
    \end{bmatrix},\ 
    \mc{R}_t := \begin{bmatrix}
        P^{-1}_t & 0\\
        0 & \Lambda_t
    \end{bmatrix}.
\end{multline}
    Since \(\mc{R}_t,M_T\geq 0\) and \(\mc{Q}_t\) are all symmetric, \(M_t\) is also symmetric.
    When \(\lambda = 0\), \(\mc{Q}_t\geq 0\), and the global well-posedness follows directly from \cite[Theorem~4.1.6]{abou2012matrix}.
    
    For strictly positive \(\lambda\), the negativity of \(\mc{Q}_t\) becomes the main obstruction to the global well-posedness of \(M\).
    We follow an approach similar to that of \cite[Theorem~3.4]{hu2025strategic} to establish the existence interval.

    \smallskip
\noindent    \textbf{Step 1. Eliminate linear terms in the ODE for \(M\).}
    Since \( \mc{Q}_t\geq -2\lambda(\|Q\|\vee \|FKF\|)I_{2d}\) and \(\mc{R}_t\leq (\|P^{-1}\|\vee \|\Lambda\|)I_{2d}\), by Lemma~\ref{lem:PSD},
\begin{multline}
    {\scriptsize
    \begin{bmatrix}
        -[2\lambda(\|Q\|\vee \|FKF\|)+ \|\mc{P}\|]I_{2d} & 0\\
        0 & -(\|P^{-1}\|\vee \|\Lambda\| + \|\mc{P}\|)I_{2d}
    \end{bmatrix}}
    \\ \leq
    \begin{bmatrix}
        \mc{Q}_t & \mc{P}_t\transpose\\
        \mc{P}_t & -\mc{R}_t
    \end{bmatrix}.
\end{multline}
    By the comparison principle of Riccati equations \cite[Theorem~4.1.4]{abou2012matrix}, \(M_t\geq L_t\),
where \(L:[0,T]\to \R^{2d\times 2d}\) solves a Riccati ODE without linear terms
\begin{multline}
    \dot{L}_t =[2\lambda(\|Q\|\vee \|FKF\|)+ \|\mc{P}\|]I_{2d} \\
    + L_t(\|P^{-1}\|\vee \|\Lambda\| + \|\mc{P}\|)L_t,\ L_T = 0.
\end{multline}

\noindent    \textbf{Step 2. Scalarize the ODE for the lower bound \(L\).}
    Using the ansatz \(L_t = l_t I_{2d}\), where \(l:[0,T]\to\R\), the ODE for \(L\) reduces to a scalar-valued equation
\begin{equation}
    \dot{l}_t =[2\lambda(\|Q\|\vee \|FKF\|)+ \|\mc{P}\|] + (\|P^{-1}\|\vee \|\Lambda\| + \|\mc{P}\|)l_t^2,
\end{equation}
with \(l_T = 0\),
which has an analytical solution
\begin{equation}
    l_t = \sqrt{p/q}\tan(\sqrt{pq}(t-T)),
\end{equation}
where \(p := 2\lambda(\|Q\|\vee \|FKF\|)+ \|\mc{P}\|\), \(q:=\|P^{-1}\|\vee \|\Lambda\| + \|\mc{P}\|\) and \(\|\mc{P}\|\leq b_{\mc{P}}\).
Clearly, \(l_t\) exists on the time interval \((T-\frac{\pi}{2\sqrt{pq}},T]\), i.e., when \(T<\frac{\pi}{2\sqrt{pq}}\), \(l_t\) exists on the entire time horizon \([0,T]\).

\smallskip
\noindent\textbf{Step 3. Establish a global upper bound.}
By the comparison principle \cite[Theorem~4.1.4]{abou2012matrix}, \(M_t\leq N_t\), where \(N:[0,T]\to \R^{2d\times 2d}\) solves a linear ODE \(\dot{N}_t + \mc{P}_t\transpose N_t + N_t \mc{P}_t + \mc{Q}_t = 0,\ N_T = 0\), whose global well-posedness is trivial to obtain.

Combining the conclusions from all steps above, the \textit{a priori} lower and upper bounds of \(M\) exist when 
\begin{equation}
    T < \frac{\pi}{2\sqrt{[2\lambda(\|Q\|\vee \|FKF\|)+ b_{\mc{P}}](\|P^{-1}\|\vee \|\Lambda\| + b_{\mc{P}})}}.
\end{equation}
By the Picard-Lindel\"of theorem \cite{teschl2012ordinary}, \(M\) uniquely exists on \([0,T]\).

Given the existence of \(M\), \(\mb{f}^\rho\) and \(\mb{g}^\tau\) satisfy linear ODEs, whose well-posedness is trivial to obtain. This concludes the proof.
\end{pf}

\section{Proofs of Theorems~\ref{thm:opt_adap_rho} and~\ref{thm:opt_adap_tau} in Section~\ref{sec:adpt}}\label{sec:D}

\begin{pf}[Proof of Theorem~\ref{thm:opt_adap_rho}]
    The proof consists of the following two steps.

    \medskip
  \noindent  \textbf{Step 1. Apply the separation principle.}
    Recall the dynamics~\eqref{eqn:Xc}--\eqref{eqn:Delta_X}.
    For any \(( \rho,\tau)\in\mathscr{A}^{\mathrm{adap}}\), \(\hat{X}^a\) and \(\Delta X\) are both \(\{\F^{Y^c}_t\}\)-adapted so that 
    \begin{equation}
        \E(X^c_t|\F^{Y^c}_t,\F^{\hat{X}^a}_t,\F^{\Delta X}_t) = \E(X^c_t|\F^{Y^c}_t) = \hat{X}^c_t.
    \end{equation}
    Consider a fictitious partially observable control problem, where \(X^c\) is the underlying state, and \(Y^c,\hat{X}^a,\Delta X\) are observations.
    We observe that (i) all the dynamics are linear, (ii) in equation~\eqref{eqn:X_hat_c}, the term \(B_tu^a_t + \rho_t\in \F^{Y^c}_t\), and most importantly, (iii) \(\tau\) is fixed in \textbf{Step (a)}, so no optimization with respect to \(\tau\) is involved.
    Consequently, the separation principle \cite{davis1977} applies.
    The partially observable control problem is equivalent to a Markovian control problem with complete observability of states \(\hat{X}^c,Y^c,\hat{X}^a,\Delta X\).

Denote by \(I^c\) the whitened innovation process associated with equation~\eqref{eqn:X_hat_c}, defined by
\begin{equation}
    \ud I^c_t := (\sigma_W\sigma_W\transpose)^{-\frac{1}{2}}[\ud Y^c_t - (H_t\hat{X}^c_t + h_t + \tau_t)\ud t],\quad I^c_0 = 0,
\end{equation}
which is an \(\{\F^{Y^c}_t\}\)-adapted Brownian motion independent of \(\rho\).
The dynamics of \(Y^c,\hat{X}^c,\hat{X}^a\) can thus be rewritten as driven by the innovation Brownian motion \(I^c\):
\begin{align}
    &\ud Y^c_t = (H_t\hat{X}^c_t + h_t + \tau_t)\ud t + (\sigma_W\sigma_W\transpose)^{\frac12}\ud I^c_t,\\
    &\ud \hat{X}^c_t = (A_t \hat{X}^c_t - K_tF_t\hat{X}^a_t + a_t - \tfrac12 K_t\mb{f}_t + \rho_t)\ud t \\
    &\qquad \qquad+ R_tH_t\transpose (\sigma_W\sigma_W\transpose)^{-\frac12}\ud I^c_t,\\
    & \ud \hat{X}^a_t = [\Theta_t\hat{X}^c_t +(A_t - K_t F_t - \Theta_t)\hat{X}^a_t + \mc{T}_t\tau_t \\
    &\qquad \qquad+ a_t - \tfrac12 K_t \mb{f}_t]\ud t + R_tH_t\transpose (\sigma_W\sigma_W\transpose)^{-\frac12}\ud I^c_t,
\end{align}
with given initial conditions \(Y^c_0 = 0\) and \(\hat{X}^c_0 = \hat{X}^a_0 = x_0\).

Next, we eliminate the unobserved state \(X^c\) from the running cost of the control problem~\eqref{eqn:adap_obj}. 
Since \(R_t = \mathrm{cov}(X^c_t|\F^{Y^c}_t)\), by the tower property,
\begin{align}
    &\E (X^c_t - r_t)\transpose Q_t (X^c_t - r_t)\ud t \\
    &= \E  (X^c_t - \hat{X}^c_t + \hat{X}^c_t - r_t)\transpose Q_t (X^c_t - \hat{X}^c_t + \hat{X}^c_t - r_t)\\
    &= \E  (\hat{X}^c_t - r_t)\transpose Q_t (\hat{X}^c_t - r_t) \\
    &\quad +  \E  (X^c_t - \hat{X}^c_t)\transpose Q_t (X^c_t - \hat{X}^c_t)\\
    &= \E  (\hat{X}^c_t - r_t)\transpose Q_t (\hat{X}^c_t - r_t) +\mathrm{Tr}(Q_tR_t),
\end{align}
where the crossing terms vanish due to the tower property that
\begin{align}
    &\E(X^c_t - \hat{X}^c_t)\transpose Q_t(\hat{X}^c_t - r_t)\\
    &= \E \Big[\E\Big((X^c_t - \hat{X}^c_t)\transpose Q_t(\hat{X}^c_t - r_t)\Big|\F^{Y^c}_t\Big)\Big]\\
    &= \E \Big[\E\Big(X^c_t - \hat{X}^c_t\Big|\F^{Y^c}_t\Big)\transpose Q_t(\hat{X}^c_t - r_t)\Big] = 0.
\end{align}
Since the trace term is independent of \(\rho\) and \(\tau\), it contributes only an additive constant to the objective and is therefore omitted throughout the optimization procedure.

So far, problem~\eqref{eqn:adap_obj} has been reduced to solving a Markovian control problem with control \(\rho\) and state processes \(Y^c,\hat{X}^c,\hat{X}^a,\Delta X\) driven by the innovation Brownian motion \(I^c\) under \(\{\F^{Y^c}_t\}\).

\medskip
\noindent\textbf{Step 2. Solve the reduced Markovian control problem.}
Note that \(Y^c\) does not explicitly enter the cost functional of the control problem.
Moreover, after rewriting the filtering dynamics in terms of the innovation process \(I^c\), the state dynamics of \(\hat{X}^c,\hat{X}^a,\Delta X\) are closed and no longer contains \(Y^c\) explicitly. Therefore, \(Y^c\) does not need to be included as an independent state variable in the subsequent control problem; its effect is captured through the innovation noise $I^c$, and the filtered state variables $(\hat X^c, \hat X^a)$ and their difference.

Denote by \(V(t,x^c,x^a,\Delta x)\) the value function of the Markovian control problem, where \(x^c,x^a,\Delta x\in\R^d\) are state variables associated with \(\hat{X}^c,\hat{X}^a,\Delta X\), respectively.
By the DPP, \(V\) satisfies the HJB equation
\begin{multline}
    \partial_t V + \tfrac{1}{2}\mathrm{Tr}(\Lambda_t\partial_{x^cx^c}V) + \tfrac{1}{2}\mathrm{Tr}(\Lambda_t\partial_{x^ax^a}V) + \mathrm{Tr}(\Lambda_t\partial_{x^ax^c}V) \\
    + \inf_\rho\Big\{(\partial_{x^c} V)\transpose(A_t x^c - K_tF_tx^a + a_t - \tfrac12 K_t\mb{f}_t + \rho)
    \\
    +(\partial_{x^a}V)\transpose\big[\Theta_tx^c +(A_t - K_t F_t - \Theta_t)x^a + \mc{T}_t\tau_t + a_t - \tfrac12 K_t \mb{f}_t\big]\\
    +(\partial_{\Delta x}V)\transpose\big[(A_t - \Theta_t)\Delta x + \rho - \mc{T}_t\tau_t\big]\\
    + \tfrac12(H_t\Delta x + \tau_t)\transpose (\sigma_W\sigma_W\transpose)^{-1}(H_t\Delta x + \tau_t) + \tfrac12\rho\transpose P_t\rho \\
    - \lambda  \big[(x^c - r_t)\transpose Q_t (x^c - r_t) \\
    + (F_t x^a + \tfrac12  \mb{f}_t)\transpose K_t (F_t x^a + \tfrac12 \mb{f}_t)\big]\Big\} = 0,
\end{multline}
with terminal condition \(V(T,x^c,x^a,\Delta x) = 0\).
Solving the infimum yields
\begin{equation}
    \rho^*(t,x^c,x^a,\Delta x) = -P_t^{-1}(\partial_{x^c} V + \partial_{\Delta x}V).
\end{equation}
Plug \(\rho^*\) back into the HJB equation and rewrite the equation in the vectorized form in terms of the augmented state variable \(\phi:= \mathrm{Concat}(x^c,x^a,\Delta x) \in \R^{3d}\).
By using the quadratic ansatz 
\begin{equation}
    V(t,\phi) = \tfrac12 \phi\transpose F^\phi_t\phi + \phi\transpose\mb{f}^\phi_t + c^\phi_t,
\end{equation}
where \(c^\phi\in C([0,T];\R)\), and collecting coefficients, we obtain the ODE system~\eqref{eqn:ODE_adap_rho}.

Notably, \(c^\phi\) satisfies the equation
\begin{multline}
    \dot{c}^\phi_t + \tfrac12 \mathrm{Tr}(\Sigma^\phi_tF^\phi_t) - \tfrac12 (\mb{f}^\phi_t)\transpose O_t\mb{f}^\phi_t + (\mb{f}^\phi_t)\transpose (d^\phi_t + d^\tau_t) \\
    +  C^\phi_t + C^\tau_t = 0, \quad c^\phi_T = 0,
\end{multline}
where \(\Sigma^\phi\) and \(C^\phi\) remain \(\tau\)-independent, whereas \(C^\tau\) depends on \(\tau\):
\begin{align}
    &\Sigma^\phi_t := \begin{bmatrix}
        \Lambda_t & \Lambda_t & 0\\
        \Lambda_t & \Lambda_t & 0\\
        0 & 0 & 0
    \end{bmatrix},\quad C^\phi_t := -\lambda(r_t\transpose Q_tr_t + \tfrac14 \mb{f}_t\transpose K_t \mb{f}_t), \\ &C^\tau_t := \tfrac12 \tau_t\transpose(\sigma_W\sigma_W\transpose)^{-1}\tau_t.
\end{align}
Clearly, this ODE for \(c^\phi\) admits a unique global solution given \(F^\phi\) and \(\mb{f}^\phi\).

Finally, the proof of the well-posedness result is similar to that of Theorem~\ref{thm:solution_det} and is thus omitted.
This concludes the proof.
\end{pf}

\begin{pf}[Proof of Theorem~\ref{thm:opt_adap_tau}]
    The proof consists of the following two steps.

    \medskip
    \noindent\textbf{Step 1. Formulate the optimization with respect to \(\tau\).}
    By the definition of the value function \(V\) in the proof of Theorem~\ref{thm:opt_adap_rho}, the expected cost under the attack tuple \((\rho^*(\tau),\tau)\) is 
    \begin{equation}
        \E[V(0,\Phi_0)] =  \tfrac12 \Phi_0\transpose F^\phi_0\Phi_0 + \Phi_0\transpose\mb{f}^\phi_0 + c^\phi_0,
    \end{equation}
where \(\Phi_0 = \mathrm{Concat}(x_0,x_0,0)\in\R^{3d}\) is deterministic.
Importantly, \(\mb{f}^\phi\) and \(c^\phi\) depend on \(\tau\), while \(F^\phi\) remains \(\tau\)-independent.
Hence, it suffices to minimize \(\Phi_0\transpose\mb{f}^\phi_0 + c^\phi_0\) with respect to \(\tau\).

Using the ODEs for \(\mb{f}^\phi\) (cf. equation~\eqref{eqn:ODE_adap_rho}) and \(c^\phi\) (cf. proof of Theorem~\ref{thm:opt_adap_rho}), we get
\begin{multline}
    \Phi_0\transpose\mb{f}^\phi_0 + c^\phi_0 = \int_0^T \Phi_0\transpose [- F^\phi_tO_t\mb{f}^\phi_t \\
    + F^\phi_t(d^\phi_t + d^\tau_t) + (D^\phi_t)\transpose\mb{f}^\phi_t + \ell^\phi_t + \ell^\tau_t]\ud t\\
    + \int_0^T \tfrac12 \mathrm{Tr}(\Sigma^\phi_t F^\phi_t) - \tfrac12 (\mb{f}^\phi_t)\transpose O_t\mb{f}^\phi_t + (\mb{f}^\phi_t)\transpose (d^\phi_t + d^\tau_t) \\
    +  C^\phi_t + C^\tau_t \ud t.
\end{multline}
Removing all \(\tau\)-independent parts yields the optimization objective
\begin{multline}
    J(\tau) := \int_0^T \Phi_0\transpose[(D^\phi_t)\transpose- F^\phi_tO_t]\mb{f}^\phi_t + \Phi_0\transpose F^\phi_t d^\tau_t \\
    + \Phi_0\transpose\ell^\tau_t  - \tfrac12 (\mb{f}^\phi_t)\transpose O_t\mb{f}^\phi_t
    + (d^\phi_t + d^\tau_t)\transpose\mb{f}^\phi_t   + C^\tau_t \ud t.
\end{multline}
The definitions of the coefficients (cf. Theorem~\ref{thm:opt_adap_rho}) further suggest that the integrand of \(J(\tau)\) only contains linear and quadratic functions in \(\mb{f}^\phi\) and \(\tau\).
Consequently, the optimization in \(\tau\) can be viewed as an LQ optimal control problem with state \(\mb{f}^\phi\) and open-loop control \(\tau\).

\medskip
\noindent\textbf{Step 2. Solve the optimal control problem.}
Since the state process \(\mb{f}^\phi\) evolves backward in time (cf. equation~\eqref{eqn:ODE_adap_rho}), we first conduct a time reflection
\begin{equation}
    U_t := \mb{f}^\phi_{T-t}, \quad \eta_t := \tau_{T-t},
\end{equation}
yielding the following dynamics of \(U\):
\begin{multline}
    \dot{U}_t = [(D^\phi_{T-t})\transpose- F^\phi_{T-t}O_{T-t}]U_t + F^\phi_{T-t}d^\phi_{T-t} \\
    + \ell^\phi_{T-t} + Q^F_{T-t}(\sigma_W\sigma_W\transpose)^{-1}\eta_t, \quad U_0 = 0.
\end{multline}
Similarly, \(J(\tau)\) can be represented in terms of \(U\) as follows
\begin{multline}
    J(\tau) = \int_0^T [\Phi_0\transpose((D^\phi_{T-t})\transpose- F^\phi_{T-t}O_{T-t}) + (d^\phi_{T-t})\transpose]U_t \\
    - \frac12 U_t\transpose O_{T-t}U_t + 
    U_t\transpose G_{T-t}\eta_t  
    \\ + \Phi_0\transpose Q^F_{T-t}(\sigma_W\sigma_W\transpose)^{-1}\eta_t  + \frac12 \eta_t\transpose(\sigma_W\sigma_W\transpose)^{-1}\eta_t \ud t.
\end{multline}

By Pontryagin's maximum principle \cite{mangasarian1966sufficient,seierstad1977sufficient}, the Hamiltonian is given by
\begin{multline}
    H(t,u,y,\eta) := y\transpose \Big([(D^\phi_{T-t})\transpose- F^\phi_{T-t}O_{T-t}]u \\
    + F^\phi_{T-t}d^\phi_{T-t}
    + \ell^\phi_{T-t} 
    + Q^F_{T-t}(\sigma_W\sigma_W\transpose)^{-1}\eta\Big)\\
    + \Big([\Phi_0\transpose((D^\phi_{T-t})\transpose- F^\phi_{T-t}O_{T-t}) \\
    + (d^\phi_{T-t})\transpose]u
    - \tfrac12 u\transpose O_{T-t}u +
    u\transpose G_{T-t}\eta 
     \\+ \Phi_0\transpose Q^F_{T-t}(\sigma_W\sigma_W\transpose)^{-1}\eta
     + \tfrac12 \eta\transpose(\sigma_W\sigma_W\transpose)^{-1}\eta\Big),
\end{multline}
where \(y\in \R^{3d}\) denotes the adjoint variable associated with \(u\).

Minimizing the Hamiltonian with respect to \(\eta\) yields
\begin{equation}
    \label{eqn:opt_eta_y}
    \eta^* = -(\sigma_W\sigma_W\transpose)\Big([Q^F_{T-t}(\sigma_W\sigma_W\transpose)^{-1}]\transpose(y + \Phi_0) + G_{T-t}\transpose u\Big).
\end{equation}
Calculate the state gradient of the Hamiltonian
\begin{multline}
    \partial_u H = (D^\phi_{T-t} - O_{T-t}\transpose (F^\phi_{T-t})\transpose)(y + \Phi_0) + d^\phi_{T-t} \\
    - O_{T-t}u + G_{T-t}\eta,
\end{multline}
which yields the adjoint equation
\begin{multline}
    \dot{y}_t = [G_{T-t}(Q^F_{T-t})\transpose - D^\phi_{T-t} + O_{T-t}\transpose(F^\phi_{T-t})\transpose] (y_t + \Phi_0)\\
    - d^\phi_{T-t}
    + (O_{T-t} + G_{T-t}\sigma_W\sigma_W\transpose G_{T-t}\transpose)U_t,\quad y_T = 0.
\end{multline}

Plugging the affine ansatz 
\begin{equation}
    y_t = F^\tau_{T-t} U_t + \mb{f}^\tau_{T-t}
\end{equation}
into the adjoint equation and collecting coefficients yield the ODE system~\eqref{eqn:ODE_U}, which concludes the proof.
\end{pf}

\end{document}